\documentclass[11pt]{amsart}

\usepackage{amsmath, amssymb, amsthm, mathtools}
\usepackage{mathrsfs}
\usepackage{enumerate}
\usepackage[hidelinks]{hyperref}
\usepackage{geometry}
\theoremstyle{plain}
\newtheorem{theorem}{Theorem}[section]
\newtheorem{corollary}[theorem]{Corollary}
\newtheorem{proposition}[theorem]{Proposition}
\newtheorem{lemma}[theorem]{Lemma}
\newtheorem{conjecture}[theorem]{Conjecture}
\newtheorem{example}[theorem]{Example}

\theoremstyle{definition}
\newtheorem{definition}[theorem]{Definition}

\theoremstyle{remark}
\newtheorem{remark}[theorem]{Remark}

\newcommand{\D}{\mathbb{D}}
\newcommand{\B}{\mathbb{B}}
\newcommand{\Bn}{\mathbb{B}_n}
\newcommand{\Sn}{\mathbb{S}^{2n-1}}
\newcommand{\Sph}{\mathbb{S}}
\newcommand{\R}{\mathbb{R}}
\newcommand{\C}{\mathbb{C}}

\newcommand{\dd}{\,\mathrm{d}}

\DeclareMathOperator{\Aut}{Aut}

\newcommand{\Ap}[2]{A^{#1}_{#2}}
\newcommand{\Hp}[1]{H^{#1}}

\newcommand{\dm}{\,\mathrm{d}m}

\title[Contraction via isoperimetry on the Bergman ball]{Contraction properties for holomorphic functions via isoperimetric stability on the Bergman ball}
\author{David Kalaj}
\address{University of Montenegro, Faculty of Natural Sciences and Mathematics, Podgorica, Montenegro}
\email{davidkalaj@gmail.com}
\author{Jian-Feng Zhu}
\address{Department of Mathematics, Shantou University, Shantou, Guangdong, China}
\email{flandy@stu.edu.cn}
\date{\today}
\hypersetup{
  pdftitle={Contraction properties for holomorphic functions via isoperimetric stability on the Bergman ball},
  pdfauthor={David Kalaj and Jian-Feng Zhu}
}

\begin{document}
\begin{abstract}
We prove a local distribution comparison for holomorphic functions that are
nearly constant on the unit ball. The argument converts quantitative geometric
information on weighted superlevel sets into truncated analytic deficit
inequalities and rests on a Fuglede-type stability estimate for nearly
spherical sets in the Bergman ball. Along the line
$q/p=\beta/\alpha$, the comparison controls the contribution of a compact
interval of levels to the layer-cake moments and quantitatively controls the
deviation of the recentered level sets from geodesic spheres. We also identify
the Hardy endpoint of the normalized weighted Bergman scale by an Abelian
distribution argument. The conclusions are deliberately local in the level
variable: no estimate for the uncontrolled layer-cake tails is asserted.
\end{abstract}
\maketitle


\section{Introduction}
\subsection{Introduction and main perspective}

A central theme of this work is the relationship between analytic norms of
holomorphic functions on the unit ball and the geometry of their level sets
with respect to the invariant (Bergman) measure. In particular, extremal
inequalities for weighted Bergman and Hardy norms are closely tied to sharp
isoperimetric properties of sublevel and superlevel sets of nonlinear
functionals of the form
\[
z \longmapsto |f(z)|^{p}(1-|z|^2)^{\alpha}.
\]
A guiding principle in geometric--analytic inequalities is that extremizers are
highly symmetric and that near-extremizers inherit quantitative symmetry. Our
approach links contraction phenomena for holomorphic function spaces to a
geometric analysis of weighted superlevel sets and, more specifically, to
quantitative isoperimetric stability in the Bergman ball. In the
one-dimensional setting, work of Kulikov shows that broad classes of Hardy and
Bergman functionals are maximized at normalized reproducing kernels, leading to
sharp contractive embeddings. Related contraction results in higher dimensions
appear in several forms; see, for instance,
\cite{KalajJFA2024LogM,KulikovGAFA2022,LiSuCHB2024}.

The constant function $f\equiv1$ plays a distinguished role: it is an
extremizer for a wide class of sharp inequalities, and its weighted superlevel
sets are centered Bergman balls. The purpose of the present paper is to study
the perturbative regime $f=1+\phi$. We show that, on a compact interval of
regular levels for which the superlevel sets remain nearly spherical, the
coarea inequality together with a Fuglede-type remainder yields both a
one-sided distribution comparison and a quantitative shape deficit.

The first main results, Theorems~\ref{thm:local_dominance} and
\ref{thm:local_gap}, establish this local distribution principle. For
holomorphic functions sufficiently close to $1$ in a $C^1$ sense on a compact
subball, under geometric assumptions on the weighted superlevel sets---a
barycenter normalization, a radial-parameter cap, and a small radial-graph
condition---they yield (i) distribution dominance of the superlevel volumes on
a compact level interval and (ii) a quantitative estimate in which the
truncated moment deficit controls a coercive shape functional measuring
deviation from spherical symmetry. Dropping the nonnegative remainder in
Theorem~\ref{thm:local_gap} gives the truncated Bergman--Bergman comparison in
Corollary~\ref{cor:local_contraction}. Appendix~\ref{app:nearly_spherical_verification}
verifies the geometric hypotheses for sufficiently small perturbations,
subject to the stated weighted tail condition.

Building on Proposition~\ref{prop:bergman_to_hardy_limit},
Theorem~\ref{thm:local_chain} connects the normalized weighted Bergman scale to
the Hardy endpoint in the critical limit. The critical-limit proposition is
proved by an Abelian distribution argument combined with the Hardy radial
maximal theorem. As $\alpha\downarrow n$, the normalized outer weight detects
the small-level asymptotic of $|f(z)|^{rn}(1-|z|^2)^n$ and recovers the boundary
$H^{rn}$ average. This limiting statement is logically separate from the local
level-set comparison.

The local results are motivated by the convex-order conjecture of
Nicola--Riccardi--Tilli \cite[Conjecture~5.1]{NicolaRiccardiTilli2025}. For the
invariant infinite-measure formulation used here, it is convenient to record
the following finite version.

\begin{conjecture}\cite[Conjecture~5.1]{NicolaRiccardiTilli2025}
\label{conj:convex_functional}
Let $\Phi:[0,1]\to\mathbb{R}$ be convex with $\Phi(0)=0$, and assume that the
two integrals below are finite. For every $p\in(0,\infty)$, $\alpha>n$, and
every holomorphic function $f\in A^p_\alpha(\B_n)$ normalized by
\[
\|f\|_{A^p_\alpha}=1,
\]
one has
\begin{equation*}\label{eq:convex_functional_ineq}
\int_{\B_n}
\Phi\!\left(|f(z)|^p (1-|z|^2)^\alpha\right)\,dv_g(z)
\ \le\
\int_{\B_n}
\Phi\!\left((1-|z|^2)^\alpha\right)\,dv_g(z).
\end{equation*}
\end{conjecture}

The compact-interval comparison proved below should be viewed as a local,
quantitative mechanism related to this conjecture, rather than as a global
proof of it. Indeed, the hinge-function reduction in
Subsection~\ref{sec:hinge_limitation} shows explicitly that a global
convex-order conclusion requires control of all level ranges, while the
present perturbative argument controls only a prescribed compact interval.
This distinction is important for the logical independence of the results in
this paper from later global developments.

\section{Preliminaries}
\subsection{Hardy and Bergman preliminaries on the disk}

Let $\D=\{z\in\C:|z|<1\}$. For $\alpha>1$ and $0<p<\infty$, we say that a function $f$ in $\D$ belongs to the Bergman space $\Ap{p}{\alpha}$ if
\[
\|f\|_{\Ap{p}{\alpha}}^p := \int_{\D} (\alpha-1) |f(z)|^p (1-|z|^2)^{\alpha-2}\,\frac{\dd x\dd y}{\pi}<\infty.
\]
For $0<r<\infty$, we say that a function $f$ analytic in $\D$ belongs to the Hardy space $\Hp{r}$ if
\[
\|f\|_{\Hp{r}}^r := \sup_{0<\rho<1}\int_0^{2\pi} |f(\rho e^{i\theta})|^r\frac{\dd\theta}{2\pi}<\infty.
\]

We write
\[
dm(z):=\frac{dx\,dy}{\pi(1-|z|^2)^2}
\]
for the M\"obius-invariant area measure on $\D$.

We recall an extremal principle (in the spirit of Kulikov) for convex functionals on Bergman spaces.

\begin{theorem}[\cite{KulikovGAFA2022} Extremizers in the disk Bergman space]\label{thm:BergmanDisk}
Let $G:[0,\infty)\to\R$ be convex. Then the maximum of
\begin{equation*}\label{eq:bergmanineq}
\int_{\D} G\bigl(|f(z)|^p(1-|z|^2)^\alpha\bigr)\,\dm(z)
\end{equation*}
over all $f\in \Ap{p}{\alpha}$ with $\|f\|_{\Ap{p}{\alpha}}=1$
is attained at constant functions (equivalently, at normalized reproducing kernels via automorphisms).
\end{theorem}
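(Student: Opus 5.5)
We follow Kulikov's strategy and reduce the convex functional to the distribution function of $u(z):=|f(z)|^p(1-|z|^2)^\alpha$ with respect to $dm$. The automorphism invariance of $dm$ together with the transformation rule $f\mapsto (f\circ\varphi_a)\,(\varphi_a')^{\alpha/p}$, which is an isometry of $\Ap{p}{\alpha}$ and leaves the functional unchanged, explains the parenthetical remark that constants and normalized reproducing kernels are equivalent extremizers. So it suffices to compare an arbitrary normalized $f$ with $f\equiv 1$, for which $u_0(z)=(1-|z|^2)^\alpha$.

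\textbf{Step 1: reduction to hinge functions.} Set $\mu(t):=m(\{u>t\})$ and $\mu_0(t):=m(\{u_0>t\})$. Since $\|f\|_{\Ap{p}{\alpha}}=1$, we get $\int u\,(1-|z|^2)^{2-\alpha}\dd x\dd y/\pi=1/(\alpha-1)$. Also $u\le 1$, by the pointwise estimate $|f(z)|^p\le (1-|z|^2)^{-\alpha}\|f\|^p$ at the origin composed with automorphisms. Every convex $G$ on $[0,1]$ is, up to an affine part, a superposition of hinges $(s-t)_+$. The plan is therefore to prove:
\begin{enumerate}[(i)]
\item the comparison $\int_t^1\mu(s)\,ds\le\int_t^1\mu_0(s)\,ds$ for all $t\in(0,1)$, which is precisely the hinge inequality $\int (u-t)_+\,dm\le\int(u_0-t)_+\,dm$;
\item control of the affine part.
\end{enumerate}
The affine part is delicate because $m$ is infinite. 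Here we use that $G$ must be finite on the relevant range, so we normalize to $G(0)=0$, and that the linear term $\int u\,dm$ diverges in the same way for $f$ and for $1$. I would handle this by proving the stronger pointwise statement $\mu(t)\le\mu_0(t)$ for all $t$, from which the convex-order comparison follows directly by layer cake: $\int G(u)\,dm=\int_0^1 G'(t)\mu(t)\,dt$ after an integration by parts, using $G'\ge G'(0)$ and monotone convergence.

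\textbf{Step 2: the isoperimetric ODE.} For a.e.\ $t$, coarea gives
\[
-\mu'(t)=\int_{\{u=t\}}\frac{|\nabla u|^{-1}}{(1-|z|^2)^2}\,\frac{d\ell}{\pi}.
\]
Then Cauchy--Schwarz, combined with the hyperbolic isoperimetric inequality $L_h^2\ge 4\pi\mu(1+\mu)$ (in our normalization), bounds the hyperbolic perimeter. The perimeter term $\int|\nabla u|\,d\ell$ is estimated through the subharmonicity of $\log|f|$: $\Delta_h\log u\ge -4\alpha/(\text{const})$ in the hyperbolic metric. Integrating over $\{u>t\}$ and applying the divergence theorem gives $\int_{\{u=t\}}|\nabla\log u|\le c\,\alpha\,\mu(t)$. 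Combining these yields a differential inequality of the form
\[
-\mu'(t)\ \ge\ \frac{\mu(t)+1}{\alpha t}\cdot\kappa,
\]
with equality exactly for $u_0$. Equivalently, $t\mapsto(\mu(t)+1)\,t^{1/\alpha}$-type quantities are monotone.

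\textbf{Step 3: integrate the ODE.} Because $\mu(1^-)=0=\mu_0(1^-)$ and $\max u\le 1$, integrating from $t$ to $1$ gives $\mu(t)\le\mu_0(t)$, which closes Step 1.

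The main obstacle is Step 2: the regularity of level sets (critical points of $u$, and the fact that $\mu$ is only absolutely continuous away from a null set of critical values), and obtaining the constant in the Laplacian bound so that it matches the isoperimetric constant exactly. Here I would use real-analyticity of $u$ off zeros of $f$ and Sard's theorem, following Kulikov.
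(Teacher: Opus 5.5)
\textbf{There is a genuine gap in Step~3.} The paper quotes this theorem from Kulikov without proof, so the comparison here is with Kulikov's argument. Its differential-inequality step is the disk analogue of Lemma~\ref{lem:coarea_differential}.

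Your Step~2 is sound in substance. Step~3, however, draws the wrong conclusion, and the ``stronger pointwise statement'' on which Step~1 rests is false. The differential inequality says that $g(t):=(1+\mu(t))\,t^{1/\alpha}$ is nonincreasing on $(0,t_0)$, where $t_0:=\max u\le 1$, while $g_0\equiv 1$ for $u_0$. Integrating from $t$ up to $t_0$ gives only $g(t)\ge t_0^{1/\alpha}$, that is, the \emph{lower} bound $\mu(t)\ge (t_0/t)^{1/\alpha}-1$. You cannot integrate up to $1$: on $(t_0,1)$ we have $\mu\equiv0$, so the inequality fails there.

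A terminal condition at the top level can never give $\mu\le\mu_0$, because upper bounds propagate only upward in $t$. This is exactly why Theorem~\ref{thm:local_dominance} needs the anchor $\mu(t_-)\le\mu_*(t_-)$ at the \emph{lower} endpoint.

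Worse, the pointwise claim contradicts the normalization. One has $\int_0^1\mu\,dt=\int_\D u\,dm=\frac{1}{\alpha-1}=\int_0^1\mu_0\,dt$. This linear term is finite, not divergent as you assert, and your weight $(1-|z|^2)^{2-\alpha}$ should be $(1-|z|^2)^{-2}$. So $\mu\le\mu_0$ on $(0,1)$ would force every normalized $f$ to be equimeasurable with $1$. It fails, for instance, whenever $t_0<1$, which by the equality case of the pointwise bound holds for every normalized $f$ other than a normalized reproducing kernel. Then $\mu=0<\mu_0$ on $(t_0,1)$, so $\mu>\mu_0$ on a set of positive measure; compare Remark~\ref{rem:anchoring_status}.

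\textbf{The missing idea.} Read the monotonicity of $g$ as a single-crossing property, and use the mean identity as an essential second input rather than as an affine correction. If $\mu(s_1)\ge\mu_0(s_1)$, i.e.\ $g(s_1)\ge 1$, then $g\ge1$ on $(0,s_1]$. Hence there is $s_0\in[0,1]$ with $\mu\ge\mu_0$ on $(0,s_0)$ and $\mu\le\mu_0$ on $(s_0,1)$.

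Take $G$ convex with $G(0)=0$; this is a standing hypothesis, not a normalization, since $m(\D)=\infty$. Layer cake and the monotonicity of $G'$ then give
\[
\int_\D G(u)\,dm-\int_\D G(u_0)\,dm=\int_0^1 G'(s)\bigl(\mu(s)-\mu_0(s)\bigr)\,ds\le G'(s_0)\int_0^1\bigl(\mu(s)-\mu_0(s)\bigr)\,ds=0 .
\]
Equivalently, $H(t):=\int_t^1(\mu_0-\mu)\,ds$ satisfies $H(0)=H(1)=0$, is nondecreasing on $(0,s_0)$ and nonincreasing on $(s_0,1)$, so $H\ge 0$. This is your hinge inequality~(i), and Proposition~\ref{prop:hinge_reduction} then finishes the proof.

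With Step~3 replaced in this way, the argument is Kulikov's. Handle the regularity issues you flag as he does; note that the singular part of the monotone function $\mu$ only helps the monotonicity of $g$.
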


Applying $G(t)=t^s$ with $s>1$ yields the contractive embedding chain:

\begin{corollary}[\cite{KulikovGAFA2022} Disk contraction chain]\label{cor:chain}
For $0<p<q<\infty$ and $1<\alpha<\beta<\infty$ with $\frac{p}{\alpha}=\frac{q}{\beta}=r$, every analytic $f$ satisfies
\[
\|f\|_{\Ap{q}{\beta}} \le \|f\|_{\Ap{p}{\alpha}} \le \|f\|_{\Hp{r}}.
\]
Equality holds for constants (equivalently, for normalized reproducing kernels).
\end{corollary}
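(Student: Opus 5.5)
We derive Corollary~\ref{cor:chain} from Theorem~\ref{thm:BergmanDisk}, with the Hardy endpoint handled by a limit $\alpha\downarrow 1$. First we rewrite the Bergman norm against the invariant measure:
\[
\|f\|_{\Ap{p}{\alpha}}^p=(\alpha-1)\int_{\D}|f(z)|^p(1-|z|^2)^{\alpha}\,\dm(z).
\]
Both inequalities are homogeneous in $f$. For the first, we may therefore assume $\|f\|_{\Ap{p}{\alpha}}=1$; if the norm is infinite there is nothing to prove, and if it is zero then $f\equiv0$. Set $u(z)=|f(z)|^p(1-|z|^2)^\alpha$ and $s=q/p=\beta/\alpha>1$. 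Then
\[
|f|^q(1-|z|^2)^\beta=u^s,\qquad
\|f\|_{\Ap{q}{\beta}}^q=(\beta-1)\int_{\D}u^s\,\dm.
\]
We apply Theorem~\ref{thm:BergmanDisk} with the convex function $G(t)=t^s$. The maximum of $\int_{\D}u^s\,\dm$ under the normalization is attained at the constant $c$ with $\|c\|_{\Ap{p}{\alpha}}=1$, which gives $c=1$. For this constant,
\[
\int_{\D}(1-|z|^2)^{\beta}\,\dm=\frac{1}{\beta-1}.
\]
Hence $\|f\|_{\Ap{q}{\beta}}^q\le 1=\|f\|_{\Ap{p}{\alpha}}^q$. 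Equality for constants holds because all the weighted norms are normalized to give the value $1$ at $f\equiv1$. Equality for normalized reproducing kernels follows from M\"obius invariance of $\dm$ and the standard change-of-variables identity for $f\circ\varphi\,(\varphi')^{\alpha/p}$.

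For the second inequality, $\|f\|_{\Ap{p}{\alpha}}\le\|f\|_{\Hp{r}}$ with $r=p/\alpha$, we use the monotonicity just proved. Fix $r$ and view $N(\alpha):=\|f\|_{\Ap{r\alpha}{\alpha}}$ as a function of $\alpha>1$. The first inequality says $N$ is nonincreasing in $\alpha$. So it suffices to show
\[
\limsup_{\alpha\downarrow1}N(\alpha)\le\|f\|_{\Hp{r}}.
\]
Assume $f\in\Hp{r}$. In polar coordinates,
\[
N(\alpha)^{r\alpha}=\int_0^1 M_r^r(\rho)^{\alpha}\cdot(\alpha-1)(1-\rho^2)^{\alpha-2}\,2\rho\,d\rho
\]
only roughly, because $|f|^{r\alpha}$ is not $(|f|^r)^{\alpha}$ integrated separately. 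Write instead $|f|^{r\alpha}=|f|^r\,|f|^{r(\alpha-1)}$ and treat the extra factor as a perturbation. The probability measures $(\alpha-1)(1-\rho^2)^{\alpha-2}2\rho\,d\rho$ concentrate at $\rho=1$ as $\alpha\downarrow1$. Note that $(\alpha-1)(1-\rho^2)^{\alpha-2}$ integrates to $1$ against $2\rho\,d\rho$.

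The main obstacle is controlling $|f|^{r(\alpha-1)}$ uniformly. We plan to use the pointwise bound $|f(z)|^r\le\|f\|_{\Hp{r}}^r(1-|z|^2)^{-1}$ (up to a constant; in fact sharp with constant $1$). This gives
\[
|f|^{r(\alpha-1)}\le\|f\|_{\Hp{r}}^{r(\alpha-1)}(1-|z|^2)^{-(\alpha-1)}.
\]
Since $M_r(\rho)^r\le\|f\|^r_{\Hp{r}}$ and the growth is integrable, the resulting integral is at most $\|f\|_{\Hp{r}}^{r\alpha}$ times a factor that tends to $1$. Combining this with monotonicity then yields $N(\alpha)\le\|f\|_{\Hp{r}}$ for every $\alpha>1$. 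Equality at constants is immediate.
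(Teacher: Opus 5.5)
Your first inequality is correct and follows the paper's own derivation: Theorem~\ref{thm:BergmanDisk} with $G(t)=t^s$, together with $\int_{\D}(1-|z|^2)^\beta\dm=1/(\beta-1)$. Your reduction of the Hardy endpoint to $\limsup_{\alpha\downarrow1}N(\alpha)\le\|f\|_{\Hp{r}}$, using the monotonicity of $N(\alpha)=\|f\|_{\Ap{r\alpha}{\alpha}}$, is also logically sound. The gap is the estimate of that limit, because your claim that ``the growth is integrable'' is false. Inserting $|f|^{r(\alpha-1)}\le\|f\|_{\Hp{r}}^{r(\alpha-1)}(1-|z|^2)^{-(\alpha-1)}$ turns the weight $(1-|z|^2)^{\alpha-2}$ into $(1-|z|^2)^{-1}$, so what you actually obtain is
\[
N(\alpha)^{r\alpha}\le(\alpha-1)\,\|f\|_{\Hp{r}}^{r(\alpha-1)}\int_0^1 M_r(\rho)^r\,\frac{2\rho\,d\rho}{1-\rho^2},
\qquad
M_r(\rho)^r:=\int_0^{2\pi}|f(\rho e^{i\theta})|^r\,\frac{d\theta}{2\pi}.
\]
For $f\not\equiv0$, $M_r$ is nondecreasing and positive for $\rho>0$. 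Hence the integral diverges logarithmically at $\rho=1$, and the bound is $+\infty$ for every $\alpha>1$; no factor tends to $1$. The pointwise estimate has exactly critical order: it uses up all of the extra decay $(1-|z|^2)^{\alpha-1}$ that made the probability weight integrable. Because the target constant is exactly $1$, no lossy estimate can work here. For instance, bounding the distribution function of $X:=|f|^r(1-|z|^2)$ by the radial maximal function only gives $\limsup_{\alpha\downarrow1}N(\alpha)^{r\alpha}\le\int_0^{2\pi}\sup_{\rho}|f(\rho e^{i\theta})|^r\,\frac{d\theta}{2\pi}$, which in general exceeds $\|f\|_{\Hp{r}}^r$.

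What is missing is an exact asymptotic, which can be assembled as follows.
\begin{enumerate}[(i)]
\item With $\delta:=\alpha-1$ you have the identity $N(\alpha)^{r\alpha}=\delta\int_{\D}X^{1+\delta}\dm$.
\item $X\le\|f\|_{\Hp{r}}^r$ by the pointwise bound you quote; this is where that bound is genuinely useful.
\item Polar coordinates with the substitution $1-\rho^2=tx$, radial boundary values, and dominated convergence via the radial maximal function give $t\,m(\{X>t\})\to\|f\|_{\Hp{r}}^r$ as $t\downarrow0$.
\item The Abelian layer-cake lemma then yields $\delta\int_{\D}X^{1+\delta}\dm\to\|f\|_{\Hp{r}}^r$.
\end{enumerate}
This is precisely Proposition~\ref{prop:bergman_to_hardy_limit} with $n=1$; there $c_{\alpha,1}=(\alpha-1)/\pi$, so the normalizations coincide. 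With this limit your monotonicity argument closes. It then gives a route to the Hardy endpoint that differs from the paper's and uses only the Bergman theorem.

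The paper does not argue this way. Its one-line derivation via $G(t)=t^s$ in Theorem~\ref{thm:BergmanDisk} yields only the Bergman inequality. The Hardy inequality rests on the cited work of Kulikov: his Hardy-space counterpart of Theorem~\ref{thm:BergmanDisk}, applied with $G(t)=t^\alpha$, gives $(\alpha-1)\int_{\D}|f|^{r\alpha}(1-|z|^2)^\alpha\dm\le1$ for $\|f\|_{\Hp{r}}=1$ directly.
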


\begin{remark}
The M\"obius group acts on $\Ap{p}{\alpha}$ by
\[
g(z)=f\!\left(\frac{z-\overline{ w}}{1-zw}\right)\frac{(1-|w|^2)^{\alpha/p}}{(1-zw)^{2\alpha/p}},\qquad w\in\D,
\]
preserving both $\|\,\cdot\,\|_{\Ap{p}{\alpha}}$ and the distribution of $|f(z)|^p(1-|z|^2)^\alpha$.
\end{remark}


\subsection{Hardy and weighted Bergman spaces on the unit ball}

Let $\B_n:=\{z\in\C^n:\ |z|<1\}$ denote the unit ball and let $\Sph:=\partial\B_n$
be its boundary. We write $d\sigma$ for the normalized surface measure on
$\Sph$ and $d\nu$ for the Lebesgue volume measure on $\B_n$.

\subsection{Hardy spaces}
A holomorphic function $f:\B_n\to\C$ belongs to the Hardy space $H^p(\B_n)$,
$0<p<\infty$, if
\begin{equation*}\label{eq:Hp_def}
\|f\|_{H^p(\B_n)}^p
:=
\sup_{0<r<1}\int_{\Sph} |f(r\zeta)|^p\,d\sigma(\zeta)
<\infty.
\end{equation*}
For $f\in H^p(\B_n)$ the radial limits $f^*(\zeta)=\lim_{r\uparrow1}f(r\zeta)$
exist for a.e.\ $\zeta\in\Sph$, and one has
\[
\|f\|_{H^p(\B_n)}^p=\int_{\Sph}|f^*(\zeta)|^p\,d\sigma(\zeta).
\]
For $p\ge 1$ this defines a norm, while for $0<p<1$ it is a quasi-norm.

\subsection{Invariant (Bergman) measure and normalized weighted Bergman spaces}
Let $\B_n:=\{z\in\C^n:\ |z|<1\}$ and let $d\nu$ denote Lebesgue volume measure on
$\B_n$. We define the invariant (Bergman) volume element by
\begin{equation*}\label{eq:bergman_measure}
dv_g(z):=(1-|z|^2)^{-(n+1)}\,d\nu(z).
\end{equation*}
Notice that
$\int_{\B_n}dv_g=\infty$. This measure is invariant under holomorphic automorphisms
of the ball: for every $\varphi\in\Aut(\B_n)$ and every integrable $F$,
\begin{equation*}\label{eq:bergman_invariance}
\int_{\B_n} F(\varphi(z))\,dv_g(z)=\int_{\B_n} F(z)\,dv_g(z).
\end{equation*}

Fix $\alpha>n$ and $0<p<\infty$. It is convenient to normalize the weighted
measure
\begin{equation*}\label{eq:weighted_prob_measure}
d\mu_\alpha(z):=c_{\alpha,n}\,(1-|z|^2)^{\alpha}\,dv_g(z),
\qquad
c_{\alpha,n}:=\Bigl(\int_{\B_n}(1-|z|^2)^{\alpha}\,dv_g(z)\Bigr)^{-1},
\end{equation*}
so that $\mu_\alpha(\B_n)=1$. The weighted Bergman space $A_\alpha^p(\B_n)$
consists of all holomorphic functions $f:\B_n\to\C$ such that
\begin{equation}\label{eq:Ap_alpha_def}
\|f\|_{A_\alpha^p(\B_n)}^p
:=
\int_{\B_n} |f(z)|^p\,d\mu_\alpha(z)
=
c_{\alpha,n}\int_{\B_n} |f(z)|^p (1-|z|^2)^\alpha\,dv_g(z)
<\infty.
\end{equation}
With this convention,
\begin{equation*}\label{eq:norm_one_is_one}
\|1\|_{A_\alpha^p(\B_n)}=1 \qquad\text{for all }\alpha>n \text{ and } p>0.
\end{equation*}
When $p\ge1$ this defines a norm; for $0<p<1$ it is a quasi-norm. In terms of
Lebesgue measure $d\nu$, \eqref{eq:Ap_alpha_def} is equivalently
\begin{equation*}\label{eq:Ap_alpha_dnu_equiv}
\|f\|_{A_\alpha^p(\B_n)}^p
=
c_{\alpha,n}\int_{\B_n} |f(z)|^p (1-|z|^2)^{\alpha-(n+1)}\,d\nu(z).
\end{equation*}

\begin{lemma}[Sharp pointwise estimate]\label{lem:sharp_pointwise}
For every $f\in A_\alpha^p(\B_n)$ and $a\in\B_n$,
\begin{equation}\label{eq:sharp_pointwise_bound}
 |f(a)|^p(1-|a|^2)^\alpha\le \|f\|_{A^p_\alpha}^p.
\end{equation}
\end{lemma}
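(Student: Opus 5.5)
Since $dv_g$ is invariant under $\Aut(\B_n)$, the plan is to move $a$ to the origin by an automorphism, transport $f$ isometrically, and then reduce the estimate at the origin to the sub-mean value property of $|g|^p$ over spheres. The case $a=0$ is the model: we need $|f(0)|^p\le \|f\|^p_{A^p_\alpha}$.

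\textbf{Step 1: the origin.} For $0<p<\infty$ the function $|f|^p$ is plurisubharmonic, hence subharmonic on $\B_n\subset\R^{2n}$. Therefore
\[
|f(0)|^p\le \int_{\Sph}|f(r\zeta)|^p\,d\sigma(\zeta)\qquad\text{for every } 0<r<1.
\]
Integrate in polar coordinates against the radial probability measure $\mu_\alpha$. Since $\mu_\alpha$ is radial and $\mu_\alpha(\B_n)=1$, this gives
\[
|f(0)|^p\le\int_{\B_n}|f|^p\,d\mu_\alpha=\|f\|^p_{A^p_\alpha}.
\]
This is the desired inequality at $a=0$, because $(1-|0|^2)^\alpha=1$.

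\textbf{Step 2: transport.} Let $\varphi_a\in\Aut(\B_n)$ be the involution exchanging $0$ and $a$. It satisfies
\[
1-|\varphi_a(z)|^2=\frac{(1-|a|^2)(1-|z|^2)}{|1-\langle z,a\rangle|^2}.
\]
Define
\[
g(z):=f(\varphi_a(z))\,\frac{(1-|a|^2)^{\alpha/p}}{(1-\langle z,a\rangle)^{2\alpha/p}},
\]
using the holomorphic branch of the power, which exists since $\mathrm{Re}(1-\langle z,a\rangle)>0$. Then, with $w=\varphi_a(z)$,
\[
|g(z)|^p(1-|z|^2)^\alpha=|f(w)|^p(1-|w|^2)^\alpha.
\]
This is the ball analogue of the disk remark. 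Because $dv_g$ is $\Aut(\B_n)$-invariant, integrating this identity against $c_{\alpha,n}\,dv_g$ gives $\|g\|_{A^p_\alpha}=\|f\|_{A^p_\alpha}$. Setting $z=0$ gives $|g(0)|^p=|f(a)|^p(1-|a|^2)^\alpha$.

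\textbf{Step 3: conclusion.} Applying Step 1 to the holomorphic function $g$ yields
\[
|f(a)|^p(1-|a|^2)^\alpha=|g(0)|^p\le\|g\|^p_{A^p_\alpha}=\|f\|^p_{A^p_\alpha},
\]
which is the claimed bound.

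The main obstacle is Step 2, specifically verifying the pointwise identity for $|g|^p(1-|z|^2)^\alpha$. The factor $|1-\langle z,a\rangle|^{-2\alpha}$ produced by $1-|\varphi_a(z)|^2$ must cancel exactly against $|(1-\langle z,a\rangle)^{-2\alpha/p}|^p$, and this is a direct computation using the displayed formula. Note that the weight $(1-|z|^2)^\alpha\,dv_g$ is handled entirely through the invariance of $dv_g$, so no Jacobian computation is needed.
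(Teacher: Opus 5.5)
Your proposal is correct and follows essentially the same route as the paper: you transport $a$ to the origin with the involution $\varphi_a$ and the factor $(1-|a|^2)^{\alpha/p}(1-\langle z,a\rangle)^{-2\alpha/p}$, and you use invariance of $dv_g$ to get $\|g\|_{A^p_\alpha}=\|f\|_{A^p_\alpha}$. You then finish with the sub-mean value inequality at $0$ integrated against the radial probability measure $\mu_\alpha$, which is exactly the paper's argument; you also write out the pointwise identity $|g(z)|^p(1-|z|^2)^\alpha=|f(\varphi_a(z))|^p(1-|\varphi_a(z)|^2)^\alpha$ that the paper leaves implicit.
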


\begin{proof}
Let $\varphi_a$ be the involutive automorphism interchanging $0$ and $a$.
Choose the holomorphic branch in the zero-free denominator and set
\[
 g_a(z):=\frac{(1-|a|^2)^{\alpha/p}}
 {(1-\langle z,a\rangle)^{2\alpha/p}}\,f(\varphi_a(z)).
\]
The sub-mean inequality at the origin, followed by radial integration against
the probability measure $d\mu_\alpha$, gives
$|g_a(0)|^p\le\|g_a\|_{A^p_\alpha}^p$. The automorphism change of variables
gives $\|g_a\|_{A^p_\alpha}=\|f\|_{A^p_\alpha}$, while
$|g_a(0)|^p=(1-|a|^2)^\alpha|f(a)|^p$.
\end{proof}

\subsection{The Bergman ball: metric, measure, and perimeter of level sets}

Let $\Bn=\{z\in\C^n:|z|<1\}$ be the unit ball endowed with the Bergman metric.
Here we borrow some notation and formulas from
\cite{KalajBergman2026Published,KalajBergman2026Arxiv}. We fix the normalization
of the Bergman metric, its volume and hypersurface forms, and the invariant
Laplacian exactly as in \cite{KalajBergman2026Arxiv}. In particular,
$\widetilde\Delta\log(1-|z|^2)=-4n$ and the induced coarea and perimeter
measures use this same metric normalization. We use
the invariant measure already introduced above,
\begin{equation*}\label{eq:bergman_measure_metric}
d\mu(z):=dv_g(z)=(1-|z|^2)^{-n-1}d\nu(z),
\end{equation*}
where $d\nu$ is Lebesgue measure on $\R^{2n}$. Thus $d\mu$ denotes the
invariant measure, whereas the normalized weighted probability measure is
always written $d\mu_\alpha$.
With the distance convention of \cite{KalajBergman2026Arxiv},
$d_b(0,z)=\operatorname{arctanh}|z|$. Accordingly, the parameter $r$ in
$|z|=\tanh(r/2)$ equals twice the distance from the origin. We retain this
radial parameter because it is the convention used in the stability theorem
below.

Let $U$ be a smooth real-valued function on $\Bn$ and assume the level set
\[
M=\{z\in\Bn:\ U(z)=c\}
\]
is compactly contained in $\Bn$. Denote by $\nabla_b U$ the gradient with respect to the Bergman metric and by $|\cdot|_b$ the Bergman norm.

\subsection{A coarea-type perimeter formula}

Let $\sigma_g$ denote the Riemannian volume form for the Bergman metric and let $\sigma_{\tilde g}$ be the induced hypersurface form on $M$. If $N$ is the unit normal along $M$ (in the Bergman metric), then (see e.g.\ Lee's formula)
\begin{equation}\label{eq:tilde_sigma}
\sigma_{\tilde g} = \iota_M^\ast(N\lrcorner \sigma_g).
\end{equation}

A direct computation shows
\begin{equation}\label{eq:formu}
\Bigl\langle \frac{\nabla_b U}{|\nabla_b U|_b}, \frac{\nabla U}{|\nabla U|}\Bigr\rangle
=\sqrt{1-|z|^2}\,\sqrt{1-\Bigl|\Bigl\langle \frac{\nabla U}{|\nabla U|},z\Bigr\rangle\Bigr|^2},
\end{equation}
and the tangent spaces agree as real hyperplanes:
\[
V\in T_zM \text{ (Bergman)}\iff V\in T_zM \text{ (Euclidean)}.
\]

Combining \eqref{eq:tilde_sigma} and \eqref{eq:formu} yields the key perimeter formula.

\begin{theorem}[\cite{KalajBergman2026Arxiv} Perimeter of a level set in the Bergman ball]\label{thm:perimeter_formula}
Let $E=\{z\in\Bn:\ U(z)<c\}$ with $\partial E=\{U=c\}$ smooth and compactly supported in $\Bn$.
Then
\begin{equation}\label{eq:forperimeter}
P(E)=\int_{U(z)=c}(1-|z|^2)^{-n-\frac12}
\sqrt{1-\Bigl|\Bigl\langle \frac{\nabla U}{|\nabla U|},z\Bigr\rangle\Bigr|^2}\,\dd\mathcal{H}(z),
\end{equation}
where $\mathcal{H}$ denotes the Euclidean $(2n-1)$-dimensional Hausdorff measure.
\end{theorem}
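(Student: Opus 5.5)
The plan is to compute the Bergman perimeter directly from its definition as the total mass of the induced hypersurface form. By \eqref{eq:tilde_sigma} we have $P(E)=\int_M \sigma_{\tilde g}$, so the work is to compare $\sigma_{\tilde g}$ with the Euclidean hypersurface measure $d\mathcal H$ on $M=\{U=c\}$. Because the tangent spaces coincide as real hyperplanes, both forms are multiples of the same object. Write $n_e=\nabla U/|\nabla U|$ for the Euclidean unit normal and $N=\nabla_b U/|\nabla_b U|_b$ for the Bergman unit normal.

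First, the volume forms. The Bergman volume is $\sigma_g=\sqrt{\det g}\,d\nu$, and in the paper's normalization $\sqrt{\det g}=(1-|z|^2)^{-n-1}$, consistent with $dv_g$. Contracting with $N$ and restricting to $M$ gives
\[
\iota_M^\ast(N\lrcorner\sigma_g)=(1-|z|^2)^{-n-1}\,\iota_M^\ast(N\lrcorner d\nu).
\]
Only the Euclidean normal component of $N$ survives on $M$. Hence
\[
\iota_M^\ast(N\lrcorner d\nu)=\langle N,n_e\rangle_{\mathrm{Euc}}\,d\mathcal H,
\]
where the pairing is the Euclidean inner product.

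Second, the normal component. Since $\nabla_b U=g^{-1}dU$, we have $\langle N,n_e\rangle_{\mathrm{Euc}}=dU(N)/|\nabla U|$. Also $dU(N)=|\nabla_b U|_b$, because $N$ is the Bergman unit gradient direction. So the needed factor is the ratio $|\nabla_b U|_b/|\nabla U|$.

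This ratio is exactly what \eqref{eq:formu} records. I will verify it using the explicit inverse Bergman metric,
\[
g^{-1}\propto(1-|z|^2)(I-z\bar z^{T}),
\]
which gives
\[
|\nabla_b U|_b^2=(1-|z|^2)\bigl(|\nabla U|^2-|\langle\nabla U,z\rangle|^2\bigr)
\]
up to the fixed normalization. The ratio is therefore $\sqrt{1-|z|^2}\,\sqrt{1-|\langle n_e,z\rangle|^2}$, and combining with the first step yields \eqref{eq:forperimeter}:
\[
(1-|z|^2)^{-n-1}\cdot(1-|z|^2)^{1/2}\sqrt{1-|\langle n_e,z\rangle|^2}\,d\mathcal H.
\]

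The main obstacle is bookkeeping of the metric normalization. The constant in $g$ must be the same one that makes $\sqrt{\det g}=(1-|z|^2)^{-n-1}$ and makes \eqref{eq:formu} hold without extra constants. I would fix it by checking on a centered sphere $|z|=\rho$, where $|\langle n_e,z\rangle|=\rho$. There the formula should reproduce the known perimeter
\[
(1-\rho^2)^{-n}\,\mathcal H(\{|z|=\rho\})\cdot(1-\rho^2)^{-1/2}\sqrt{1-\rho^2},
\]
and this is consistent with the coarea formula for $dv_g$ applied to the distance function.

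A second point needs care. In \eqref{eq:formu}, $\langle\cdot,z\rangle$ must be read as the complex Hermitian pairing, with $\nabla U$ identified with $2\partial U/\partial\bar z$. This pairing captures both the radial and the complex-tangential ($Jz$) directions, which is what produces the modulus squared.
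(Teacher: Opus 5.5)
Your proposal is correct and follows essentially the paper's route: the paper obtains \eqref{eq:forperimeter} by combining $\sigma_{\tilde g}=\iota_M^\ast(N\lrcorner\sigma_g)$ with the angle identity \eqref{eq:formu} and the coincidence of the tangent hyperplanes. You additionally carry out the ``direct computation'' of \eqref{eq:formu}, via $|\nabla_bU|_b^2=(1-|z|^2)\bigl(|\nabla U|^2-|\langle\nabla U,z\rangle|^2\bigr)$, which the paper only cites. Your normalization concern resolves itself: with the metric $(1-|z|^2)^{-2}\bigl[(1-|z|^2)|v|^2+|\langle v,z\rangle|^2\bigr]$, the requirement $\sqrt{\det g}=(1-|z|^2)^{-n-1}$ forces the scaling constant to be $1$. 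This is the same normalization that gives $d_b(0,z)=\operatorname{arctanh}|z|$ and $\widetilde\Delta\log(1-|z|^2)=-4n$.
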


\begin{remark}
For the centered model ball $\mathbb{B}_r=\{z:|z|<t=\tanh(r/2)\}$, where $r$
is the radial parameter just described, \eqref{eq:forperimeter} reduces to the
classical expression computed by Zhu \cite{ZhuUnitBall2005}:
\begin{equation*}\label{eq:br_zhu}
P(\mathbb{B}_r)=\int_{\{ |z|=t\}}(1-t^2)^{-n}\,\dd\mathcal{H}
=t^{2n-1}(1-t^2)^{-n}\int_{\Sph}d\mathcal H(\omega).
\end{equation*}
Indeed, for $U(z)=|z|$ we have $\langle \nabla U/|\nabla U|,z\rangle=|z|$.
\end{remark}

\subsection{Fuglede-type stability in the Bergman ball}

We state a quantitative stability estimate for nearly spherical sets in the Bergman ball, in the spirit of Fuglede.

\begin{theorem}[\cite{KalajBergman2026Arxiv} Fuglede-type theorem in the Bergman ball]\label{thm:fuglede_bergman}
For every $r_0>0$ there exists $\varepsilon_0\in(0,1/2]$, depending only on $r_0$, such that the following holds.

Let $E\subset \Bn$ have Bergman barycenter at the origin and satisfy the volume constraint
\begin{equation*}\label{eq:vol_constraint_thm}
\mu(E)=\mu(\mathbb{B}_r)
\end{equation*}
for some $r\in(0,r_0]$. Assume $\partial E$ is a radial graph over the unit sphere $\Sph$:
\begin{equation}\label{eq:graph_param_thm}
Z(\omega)=\omega\,\tanh\!\left(\frac r2(1+u(\omega))\right),\qquad \omega\in\Sph,
\end{equation}
with
\begin{equation*}\label{eq:u_small_thm}
\|u\|_{W^{1,\infty}(\Sph)}\le \varepsilon_0.
\end{equation*}
Then there exists $c_1=c_1(r_0,n)>0$ such that
\begin{equation*}\label{eq:fuglede_estimate}
\frac{P(E)-P(\mathbb{B}_r)}{P(\mathbb{B}_r)}\ge c_1\,\|u\|_{W^{1,2}(\Sph)}^2.
\end{equation*}
In particular $P(E)\ge P(\mathbb{B}_r)$ with equality iff $E=\mathbb{B}_r$.
\end{theorem}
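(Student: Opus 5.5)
The plan is the classical Fuglede argument, carried out in the hyperbolic geometry of $\Bn$ with the radial parameter $r$ of \eqref{eq:graph_param_thm}.

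\textbf{Expressing volume and perimeter through $u$.} Write $R(\omega)=\tfrac r2(1+u(\omega))$, so that the boundary point is $Z(\omega)=\omega\tanh R(\omega)$. In geodesic polar coordinates the invariant measure is $\mu=\int_{\Sph}\int_0^{R(\omega)}\kappa(s)\,ds\,d\mathcal H(\omega)$. Here $\kappa(s)$ is the density of the geodesic spheres, of the form $\sinh^{2n-1}(s)\cosh(s)$ up to constants fixed by the normalisation.

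For the perimeter, use Theorem~\ref{thm:perimeter_formula} with $U(z)=\operatorname{arctanh}|z|-R(z/|z|)$. This gives
\[
P(E)=\int_{\Sph}\sqrt{\kappa(R)^2+\kappa_1(R)^2|\nabla_{\Sph}R|^2_{\mathrm{CR}}+\kappa_2(R)^2|\partial_T R|^2}\,d\mathcal H(\omega).
\]
The gradient of $R$ splits into its complex-tangential part and its Reeb (Hopf) direction $T$, which carry different hyperbolic scale factors $\kappa_1,\kappa_2$. Both factors are comparable to $\kappa(R)/\sinh R$ uniformly for $R\le r_0$.

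\textbf{Second-order expansion.} Since $\|u\|_{W^{1,\infty}}\le\varepsilon_0$, Taylor-expand both functionals to second order in $u$, with remainders $O(\varepsilon_0)\|u\|_{W^{1,2}}^2$.

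The volume constraint $\mu(E)=\mu(\mathbb B_r)$ gives
\[
\int_{\Sph}u=-c(r)\int_{\Sph}u^2+O\bigl(\varepsilon_0\|u\|_2^2\bigr).
\]
The barycenter condition is needed to the first order. The Bergman barycenter is defined through the invariant geometry, for example via $\int_E \tanh(\cdot)$-weighted coordinates or by the vanishing of the gradient of $\int_E d_b^2(\cdot,a)$ at $a=0$. It forces $|\langle u,Y\rangle|\lesssim\|u\|_2^2$ for each degree-one spherical harmonic $Y$.

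Substituting these into the perimeter expansion gives
\[
\frac{P(E)-P(\mathbb B_r)}{P(\mathbb B_r)}=Q_r(u)+O\bigl(\varepsilon_0\|u\|_{W^{1,2}}^2\bigr),
\]
where $Q_r(u)=a(r)\int|\nabla_{\mathrm{CR}}u|^2+b(r)\int|Tu|^2-d(r)\int u^2$ is a quadratic form with $a,b,d$ explicit and continuous for $r\in(0,r_0]$.

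\textbf{Coercivity of $Q_r$ (the main obstacle).} Decompose $u$ into bigraded spherical harmonics $H^{p,q}$. These diagonalise both $\nabla_{\mathrm{CR}}$ and $T$, with eigenvalues $2pq+(n-1)(p+q)$ and $(p-q)^2$ up to normalisation. The task is to show that $Q_r$ is positive definite on the constant-free, degree-one-free part, with a uniform gap:
\[
Q_r(u)\ge c\,\|u\|_{W^{1,2}}^2\quad\text{for } u\perp H^{0,0}\oplus H^{1,0}\oplus H^{0,1}.
\]
The degree-one modes are exactly the infinitesimal automorphisms, where $Q_r$ vanishes. Positivity on degree $p+q\ge2$ therefore amounts to checking that the lowest eigenvalue beats $d(r)$, uniformly in $r$. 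This comes down to the explicit inequality that the ratio $d(r)/a(r)$ lies strictly below the degree-two eigenvalue.

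Two regimes need attention. As $r\to0$ the coefficients reduce to the Euclidean Fuglede constants, so the small-$r$ end is controlled by the Euclidean Fuglede inequality. On $[\delta,r_0]$ one uses compactness together with the explicit hyperbolic monotonicity, namely that the hyperbolic spheres have larger mean curvature. The constant and degree-one components are then absorbed using the constraints: they are $O(\|u\|_2^2)$, hence $O(\varepsilon_0\|u\|_{W^{1,2}}^2)$ in the expansion. Choosing $\varepsilon_0(r_0)$ small absorbs all error terms and yields the bound with $c_1=c_1(r_0,n)$.

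\textbf{Equality case.} If $P(E)=P(\mathbb B_r)$, the inequality forces $\|u\|_{W^{1,2}}=0$, so $u\equiv0$ and $E=\mathbb B_r$.
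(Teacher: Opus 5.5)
The paper does not prove this statement. It is quoted from \cite{KalajBergman2026Arxiv} and used only as a black box in the proofs of Theorems~\ref{thm:local_dominance} and~\ref{thm:local_gap}, so there is no in-paper argument to compare with. Judged on its own, your architecture is the right Fuglede scheme, and most steps go through with constants uniform in $r\in(0,r_0]$. Your identification of the degree-one modes with normal components of infinitesimal automorphisms, and hence with the kernel of the quadratic form, is correct. Two small corrections:
\begin{enumerate}[(i)]
\item The barycenter here is the Ja\'cimovi\'c--Kalaj one. Its Euler--Lagrange condition at the origin is exactly $\int_E z\,d\mu(z)=0$ (see \eqref{eq:gradLt0}). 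In polar coordinates this reads $\int_{\Sph}\omega\,F(\rho(\omega))\,d\sigma=0$ with $F$ increasing, which gives your bound $|\langle u,Y\rangle|\lesssim\|u\|_{L^2}^2$ directly. There is no need to guess a $d_b^2$- or $\tanh$-weighted notion.
\item Since $u$ is only Lipschitz, compute $P(E)$ by the area formula for the graph rather than from Theorem~\ref{thm:perimeter_formula}, which assumes a smooth level set. The integrand is the same a.e.
\end{enumerate}

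The one real gap is the step you yourself call the main obstacle: the reasons you give do not prove coercivity.
\begin{enumerate}[(i)]
\item Compactness on $[\delta,r_0]$ only makes a gap uniform once it is known to be positive at each fixed $r$.
\item That hyperbolic spheres have larger mean curvature says nothing by itself about degree $\ge2$ modes.
\item The criterion that $d(r)/a(r)$ lie below the degree-two eigenvalue ignores the Reeb term. It is vacuous for $n=1$, where there is no horizontal gradient and everything is carried by $b(r)\int|Tu|^2$.
\end{enumerate}

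The step closes by direct computation. Put $s=r/2$, $\delta=su$, $\kappa(s)=\sinh^{2n-1}s\cosh s$ (the area density of geodesic spheres in the paper's normalization) and $H=\kappa'/\kappa$. The induced metric on $\{d_b=s\}$ is the Berger metric with horizontal scale $\sinh s$ and Reeb scale $\sinh s\cosh s$. Eliminating $\int\delta$ through the volume constraint gives
\[
P(E)-P(\mathbb B_r)=\frac{\kappa(s)}{2}\int_{\Sph}\Bigl(\frac{|\nabla_H\delta|^2}{\sinh^2 s}+\frac{|T\delta|^2}{\sinh^2 s\cosh^2 s}+H'(s)\,\delta^2\Bigr)\,d\mathcal H+O(\varepsilon_0)\,P(\mathbb B_r)\,\|u\|_{W^{1,2}}^2 ,
\]
with $-H'(s)=\frac{2n-2}{\sinh^2 s}+\frac{1}{\sinh^2 s\cosh^2 s}$. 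Here $\nabla_H,\Delta_H,T$ are the horizontal gradient, horizontal sub-Laplacian and unit Reeb field of the round unit sphere.

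On $H^{p,q}$ one has $-\Delta_H=4pq+2(n-1)(p+q)$ and $-T^2=(p-q)^2$. So the multiplier is $\sinh^{-2}s\,(\lambda_{p,q}-\lambda_{1,0})$, where
\[
\lambda_{p,q}:=4pq+2(n-1)(p+q)+(p-q)^2\operatorname{sech}^2 s .
\]
This multiplier vanishes on degree one. Because $\operatorname{sech}^2 s\le1$, one checks directly that $\lambda_{p,q}\ge2\lambda_{1,0}$ whenever $p+q\ge2$. Hence the multiplier is at least $\tfrac12\lambda_{p,q}\sinh^{-2}s$ on degrees $\ge2$.

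Since $s^2/\sinh^2 s$ and $\operatorname{sech}^2 s$ are bounded above and below for $0<s\le r_0/2$, this gives coercivity in $W^{1,2}(\Sph)$ with a constant $c(n,r_0)$ for all $r\in(0,r_0]$ at once. No separate small-$r$ or compactness regime is needed. The structural fact is not about mean curvature: the Hopf fibres of complex-hyperbolic geodesic spheres are relatively long, so the first nonzero eigenvalue of the Berger Laplacian is still carried by the degree-one harmonics.
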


\begin{remark}
Theorem~\ref{thm:fuglede_bergman} is the quantitative geometric input used below: on the nearly spherical class, the perimeter deficit controls the size of the radial perturbation.
\end{remark}

\section{Main results}

\subsection{Distribution functions and the coarea inequality}\label{sec:distribution_coarea}

Fix $p>0$ and $\alpha>n$. For holomorphic $f:\B_n\to\C$, define
\begin{equation*}\label{eq:u_ball}
u_f(z):=|f(z)|^p(1-|z|^2)^\alpha,
\qquad
A_t(f):=\{z\in\B_n:\ u_f(z)>t\},
\qquad
\mu_f(t):=\mu(A_t(f)).
\end{equation*}
For the extremizer $f\equiv1$, write
\[
u_*(z):=(1-|z|^2)^\alpha,
\qquad
A_t^*:=\{u_*>t\},
\qquad
\mu_*(t):=\mu(A_t^*).
\]
For $0<t<1$,
\[
A_t^*=\{|z|<\sqrt{1-t^{1/\alpha}}\},
\]
so the model superlevel sets are centered Bergman balls.

\begin{lemma}[Coarea differential inequality]\label{lem:coarea_differential}
Suppose that the superlevel sets under consideration are relatively compact
and have finite invariant volume. Then, for almost every regular value $t>0$,
\begin{equation}\label{eq:global_coarea_differential}
 -\mu_f'(t)\ge
 \frac{P(A_t(f))^2}{4n\alpha\,t\,\mu_f(t)}.
\end{equation}
\end{lemma}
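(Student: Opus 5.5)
We combine the coarea formula in the Bergman metric with Cauchy--Schwarz, and then evaluate the resulting integral of the gradient with an invariant-Laplacian identity. Write $u=u_f$, $M_t=\{u=t\}$, and let $d\mathcal{H}_b$ denote the Bergman hypersurface measure, so that $P(A_t(f))=\int_{M_t}d\mathcal{H}_b$.

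\textbf{Step 1 (coarea and Cauchy--Schwarz).} For almost every regular value $t$, the Bergman coarea formula gives
\[
-\mu_f'(t)=\int_{M_t}\frac{d\mathcal{H}_b}{|\nabla_b u|_b}.
\]
Cauchy--Schwarz on $M_t$ then yields
\[
P(A_t(f))^2\le\Bigl(\int_{M_t}\frac{d\mathcal{H}_b}{|\nabla_b u|_b}\Bigr)\Bigl(\int_{M_t}|\nabla_b u|_b\,d\mathcal{H}_b\Bigr).
\]
It therefore suffices to prove
\[
\int_{M_t}|\nabla_b u|_b\,d\mathcal{H}_b\le 4n\alpha\,t\,\mu_f(t).
\]

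\textbf{Step 2 (divergence theorem).} On $M_t$ the outward unit normal of $A_t(f)$ is $-\nabla_b u/|\nabla_b u|_b$, and $u=t$ there. Set $w=\log u$, so that $\nabla_b u=u\nabla_b w=t\nabla_b w$ on $M_t$. Because $A_t(f)$ is relatively compact, the divergence theorem applies, and
\[
\int_{M_t}|\nabla_b u|_b\,d\mathcal{H}_b
=t\int_{M_t}|\nabla_b w|_b\,d\mathcal{H}_b
=-t\int_{A_t(f)}\Delta_b w\,d\mu .
\]
The normalization of $\widetilde\Delta$ must be matched to the Bergman metric used in the coarea and perimeter measures; the paper fixes this via $\widetilde\Delta\log(1-|z|^2)=-4n$.

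\textbf{Step 3 (the Laplacian of $w$).} On $A_t(f)$ we have $f\neq0$, so
\[
w=p\log|f|+\alpha\log(1-|z|^2),
\]
where $\log|f|$ is pluriharmonic and hence $\widetilde\Delta$-harmonic. Consequently $\Delta_b w=-4n\alpha$ on $A_t(f)$. Substituting into Step 2 gives
\[
\int_{M_t}|\nabla_b u|_b\,d\mathcal{H}_b=4n\alpha\,t\,\mu_f(t),
\]
with equality, and combining this with Step 1 yields \eqref{eq:global_coarea_differential}.

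\textbf{Main obstacle.} The delicate point is the constant, which depends on how $\widetilde\Delta$ is normalized relative to the Laplace--Beltrami operator $\Delta_b$ of the metric that defines $P$. I would fix this by checking on the model case $f\equiv1$. There $A_t^*$ is the centered ball of radius $s=\sqrt{1-t^{1/\alpha}}$, and $\mu_*(t)$, $\mu_*'(t)$ and $P(A_t^*)$ can be computed explicitly from the earlier Remark, where
\[
P(\mathbb{B}_r)=s^{2n-1}(1-s^2)^{-n}\int_{\Sph}d\mathcal{H}.
\]
The level set is a sphere on which $|\nabla_b u|_b$ is constant, so Cauchy--Schwarz is an equality, and the identity in Step 3 must then hold with equality. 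This check confirms the factor $4n\alpha$.

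A secondary point is that Steps 1 and 2 apply only to almost every $t$ with $M_t$ smooth. This is where the hypotheses are used: Sard's theorem, real-analyticity of $u$ away from the zeros of $f$, relative compactness, and finite invariant volume, the last two being needed for the divergence theorem.
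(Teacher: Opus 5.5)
Your proposal is correct and is essentially the paper's own proof. Both combine the coarea formula with Cauchy--Schwarz, then apply the divergence theorem to $\log u_f$ on $A_t(f)$, using $\widetilde\Delta\log|f|=0$ (since $f\neq0$ there) and $\widetilde\Delta\log(1-|z|^2)=-4n$ to get $\int_{\{u_f=t\}}|\nabla_b u_f|_b\,d\sigma_b=4n\alpha t\mu_f(t)$. Your model-case check, which does give exact equality for $f\equiv1$, is a sound way to confirm that $\widetilde\Delta$ is normalized compatibly with the metric defining $d\mu$ and $P$; the paper leaves this implicit by fixing normalizations via its cited reference.
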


\begin{proof}
The coarea formula and Cauchy--Schwarz give
\[
 P(A_t)^2\le(-\mu_f'(t))
 \int_{u_f=t}|\nabla_bu_f|_b\,d\sigma_b.
\]
On $A_t$ one has $f\ne0$, and therefore
$\widetilde\Delta\log|f|=0$. Since
$\widetilde\Delta\log(1-|z|^2)=-4n$, the divergence theorem yields
\[
 \int_{u_f=t}|\nabla_bu_f|_b\,d\sigma_b
 =-t\int_{A_t}\widetilde\Delta\log u_f\,d\mu
 =4n\alpha t\mu_f(t).
\]
Substitution proves \eqref{eq:global_coarea_differential}.
\end{proof}

\subsection{Local distribution comparison and quantitative stability}
\label{sec:local_main}

\begin{definition}[Local centered interval hypothesis]\label{Setup}
Fix \(n\ge 1\), \(p>0\), \(\alpha>n\), and \(r_0>0\). Let \(f\) be holomorphic in \(\B_n\), write
\begin{align*}
   &f=1+\phi,\qquad u(z):=u_f(z)=|f(z)|^p(1-|z|^2)^\alpha,\qquad A_t:=\{u>t\},\\
   & \mu(t):=\mu(A_t),\qquad t_0:=\sup_{\B_n}u.
\end{align*}
Assume there exists a nontrivial interval of regular values
$0<t_-<t_+<\min\{t_0,1\}$ such that for a.e.\
\(t\in(t_-,t_+)\):
\begin{enumerate}[(i)]
\item if \(r(t)\) is defined by \(\mu(A_t)=\mu(\B_{r(t)})\), then \(r(t)\le r_0\);
\item there exists an automorphism \(\psi_t\in\Aut(\B_n)\) such that
\[
\widetilde A_t:=\psi_t(A_t)
\qquad\text{satisfies}\qquad
\mathrm{Bar}(\widetilde A_t)=0;
\]
\item \(\partial \widetilde A_t\) is a radial graph over \(\Sph\), in the parametrization
\eqref{eq:graph_param_thm}, with graph function \(\widetilde u_t\) satisfying
\[
\|\widetilde u_t\|_{W^{1,\infty}(\Sph)}\le \varepsilon_0(r_0),
\]
where \(\varepsilon_0(r_0)\) is the smallness threshold in
Theorem~\ref{thm:fuglede_bergman}.
\item the recenterings can be selected so that
$t\mapsto\widetilde u_t$ is strongly measurable as a map into
$W^{1,2}(\Sph)$.
\end{enumerate}
\end{definition}

\begin{remark}
When \(f=1+\phi\) is \(C^1\)-close to \(1\) on a compact subball and satisfies
the weighted tail condition \eqref{eq:tail_condition}, the local centered
interval hypothesis is verified by the radial-graph and recentering arguments;
see Appendix~\ref{app:nearly_spherical_verification}.
\end{remark}

\begin{theorem}[Local dominance near \(f\equiv 1\)]\label{thm:local_dominance}
Fix \(n\ge 1\), \(p>0\), \(\alpha>n\), and \(r_0>0\). Under the assumptions
of Setup~\ref{Setup}, assume in addition that \(t_-\) satisfies the anchoring inequality
\[
\mu(t_-)\le \mu_*(t_-),
\]
one has
\[
\mu(t)\le \mu_*(t)\qquad\text{for all }t\in[t_-,t_+].
\]
\end{theorem}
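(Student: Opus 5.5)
We will run a comparison argument for first-order differential inequalities. The model distribution function $\mu_*$ turns the coarea inequality into an equality, while for $f$ the Fuglede estimate gives the inequality in the favourable direction.

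\emph{Step 1: the model.} The sets $A_t^*$ are centered balls, and on $\partial A_t^*$ the function $|\nabla_b u_*|_b$ is constant. Hence Cauchy--Schwarz in the proof of Lemma~\ref{lem:coarea_differential} is an equality, and
\[
-\mu_*'(t)=\frac{P(A_t^*)^2}{4n\alpha\,t\,\mu_*(t)}=\frac{I(\mu_*(t))^2}{4n\alpha\,t\,\mu_*(t)}.
\]
Here $I(v):=P(\mathbb{B}_{r})$ with $\mu(\mathbb{B}_r)=v$ denotes the isoperimetric profile of centered balls.

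\emph{Step 2: the comparison for $f$.} Fix a.e.\ $t\in(t_-,t_+)$. By (ii) the set $\widetilde A_t=\psi_t(A_t)$ has barycenter $0$. Since $\psi_t$ is an isometry of the Bergman metric, $\mu(\widetilde A_t)=\mu(t)$ and $P(\widetilde A_t)=P(A_t)$. Hypotheses (i) and (iii) place $\widetilde A_t$ in the range of Theorem~\ref{thm:fuglede_bergman}, which gives $P(A_t)\ge P(\mathbb{B}_{r(t)})=I(\mu(t))$. Lemma~\ref{lem:coarea_differential} then yields
\[
-\mu'(t)\ge \frac{I(\mu(t))^2}{4n\alpha\,t\,\mu(t)}\quad\text{for a.e. } t\in(t_-,t_+).
\]

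\emph{Step 3: comparison of the two ODEs.} Set $G(v):=\int^{v}\frac{4n\alpha\,s}{I(s)^2}\,ds$, which is increasing and locally Lipschitz on $(0,\infty)$ because $I>0$ is continuous. Step 2 then reads $\frac{d}{dt}G(\mu(t))\le -1/t$, while Step 1 gives $\frac{d}{dt}G(\mu_*(t))=-1/t$. Integrating from $t_-$ to $t$ gives
\[
G(\mu(t))\le G(\mu(t_-))-\log(t/t_-)\le G(\mu_*(t_-))-\log(t/t_-)=G(\mu_*(t)),
\]
where the middle inequality uses the anchoring assumption and the monotonicity of $G$. Since $G$ is strictly increasing, $\mu(t)\le\mu_*(t)$ on $[t_-,t_+]$.

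\emph{Main obstacle: regularity of $\mu$.} Integrating $\frac{d}{dt}G(\mu)$ requires care, because $\mu$ is only monotone (nonincreasing). Its distributional derivative may have a singular part, but that part is nonpositive, so
\[
G(\mu(t))-G(\mu(t_-))\le \int_{t_-}^t \frac{d}{ds}G(\mu(s))\,ds,
\]
with the integrand taken as the a.e.\ derivative. This is the direction we need, and it is exactly the inequality used in Step 3.

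Two further points must be checked. First, Lemma~\ref{lem:coarea_differential} needs the superlevel sets to be relatively compact. This holds for $t>t_-\ge 0$ by Lemma~\ref{lem:sharp_pointwise}, since $u\to0$ at the boundary for $f\in A^p_\alpha$; alternatively it is implicit in the radial-graph hypothesis (iii), which forces $A_t$ to lie within Bergman radius comparable to $r_0$. Second, we need the identity $\widetilde\Delta\log|f|=0$ on $A_t$, which holds because $f\ne0$ there. If the endpoint $t=t_+$ is not a regular value, the conclusion extends to it by right-continuity of $\mu$ together with continuity of $\mu_*$.
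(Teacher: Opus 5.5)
Your argument is correct and is essentially the paper's proof: the coarea inequality, the Fuglede bound with the remainder dropped, and a comparison of $G(\mu(t))$ with $G(\mu_*(t))$ anchored at $t_-$; your $G$ is increasing where the paper's is decreasing, which is only cosmetic. The one real difference is in the integration step: you use the nonpositive singular part of the monotone function $G\circ\mu$, whereas the paper asserts that $\mu$ is absolutely continuous on the interval of regular values, so your version is slightly more robust. Two small corrections: take relative compactness from (i) and (iii), not from Lemma~\ref{lem:sharp_pointwise} (that lemma only bounds $u$, and $f\in A^p_\alpha$ is not assumed); and any extension to $t_+$ follows from monotonicity of $\mu$, not right-continuity, though your integral inequality already covers $t_+$ directly.
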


\begin{remark}[Status of the anchoring condition]\label{rem:anchoring_status}
The inequality $\mu(t_-)\le\mu_*(t_-)$ is an initial condition for the ODE
comparison and is not included in Setup~\ref{Setup}. If, in addition,
$\|f\|_{A^p_\alpha}=1$, then \eqref{eq:sharp_pointwise_bound} gives
$0\le u_f\le1$, and normalization yields the global identity
\[
 \int_0^1\bigl(\mu(t)-\mu_*(t)\bigr)\,dt=0.
\]
It follows that some level in $(0,1)$ satisfies
$\mu(t)\le\mu_*(t)$. However, compact $C^1$ control and the weighted tail
condition alone do not ensure that such a level belongs to the particular
compact interval on which the radial-graph hypotheses have been verified.
Accordingly, Theorem~\ref{thm:local_dominance} is conditional on the displayed
anchoring inequality; Proposition~\ref{prop:verify_setup} below verifies the
geometric hypotheses but does not claim to produce an anchor.
\end{remark}

\begin{corollary}[Truncated Bergman--Bergman comparison]\label{cor:local_contraction}
Assume the hypotheses of Theorem~\ref{thm:local_dominance}. For every \(q>p\) and \(\beta>\alpha\) with
\[
\frac{q}{p}=\frac{\beta}{\alpha},
\]
put $s=q/p$. Then
\begin{equation}\label{eq:local_contraction_new}
s\int_{t_-}^{t_+}t^{s-1}\mu(t)\,dt
\le s\int_{t_-}^{t_+}t^{s-1}\mu_*(t)\,dt.
\end{equation}
These are exactly the contributions of $[t_-,t_+]$ to the layer-cake
representations of
$\int_{\B_n}|f|^q(1-|z|^2)^\beta\,d\mu$ and
$\int_{\B_n}(1-|z|^2)^\beta\,d\mu$, respectively.  No comparison of the two
uncontrolled tails is asserted.
\end{corollary}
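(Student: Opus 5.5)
The corollary follows from Theorem~\ref{thm:local_dominance} by integrating against a nonnegative weight; the layer-cake interpretation then comes from Fubini. Under the hypotheses of Theorem~\ref{thm:local_dominance} we have $\mu(t)\le\mu_*(t)$ for every $t\in[t_-,t_+]$. Since $s=q/p>1$ and $t_->0$, the weight $s\,t^{s-1}$ is positive and bounded on $[t_-,t_+]$. Multiplying the pointwise inequality by this weight and integrating over $[t_-,t_+]$ gives \eqref{eq:local_contraction_new}, provided both integrals make sense.

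\emph{Finiteness and measurability.} The functions $\mu$ and $\mu_*$ are nonincreasing, hence Borel measurable. Also $\mu_*(t)\le\mu_*(t_-)<\infty$ on the interval, because $A^*_{t_-}$ is a centered Bergman ball and $t_-<1$. Combined with $\mu\le\mu_*$, this makes both integrands bounded on a compact interval, so both sides of \eqref{eq:local_contraction_new} are finite. No use of the tails is needed.

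\emph{Layer-cake identification.} The relation $\beta/\alpha=q/p=s$ gives
\[
|f|^q(1-|z|^2)^\beta=\bigl(|f|^p(1-|z|^2)^\alpha\bigr)^s=u_f^s,
\qquad (1-|z|^2)^\beta=u_*^s .
\]
This is the only point where the line $q/p=\beta/\alpha$ is used. By Tonelli,
\[
\int_{\B_n}u^s\,d\mu=\int_0^\infty s\,t^{s-1}\mu(\{u>t\})\,dt ,
\]
and splitting $(0,\infty)$ into $(0,t_-)\cup[t_-,t_+]\cup(t_+,\infty)$ shows that the middle piece is precisely the left-hand side of \eqref{eq:local_contraction_new}. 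The same applies with $u_*$ on the right. For $f$ itself, Lemma~\ref{lem:sharp_pointwise} gives $u\le\|f\|^p_{A^p_\alpha}$, so the range is bounded, although this is not needed for the truncated statement.

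\emph{Remaining point.} The identification in Tonelli is valid even when the full integrals are infinite, since $\mu(t)$ may be infinite for small $t$ because $\mu(\B_n)=\infty$. This is why no statement about the tails $(0,t_-)$ and $(t_+,\infty)$ is made. Beyond this, the argument is immediate once the dominance theorem is available.
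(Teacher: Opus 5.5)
Your proposal is correct and uses the same argument as the paper: multiply the pointwise dominance $\mu(t)\le\mu_*(t)$ from Theorem~\ref{thm:local_dominance} by $s\,t^{s-1}$ and integrate over $[t_-,t_+]$. Your additional remarks on measurability, on finiteness via $\mu_*(t_-)<\infty$, and on the layer-cake identification through $u_f^s=|f|^q(1-|z|^2)^\beta$ spell out details the paper leaves implicit.
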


\begin{theorem}[Quantitative gap and shape control]\label{thm:local_gap}
Fix \(n\ge 1\), \(p>0\), \(\alpha>n\), and \(r_0>0\). Under the assumptions
of Setup~\ref{Setup}, and assuming $\mu(t_-)\le\mu_*(t_-)$, for every
\(q>p\) and \(\beta>\alpha\) with
\[
\frac{q}{p}=\frac{\beta}{\alpha},
\qquad
s:=\frac{q}{p}>1,
\]
there is a constant
\[
c=c(n,p,\alpha,q,r_0,t_-,t_+)>0
\]
such that the following quantitative truncated estimate holds:
\begin{equation}\label{eq:local_gap_new}
s\int_{t_-}^{t_+}t^{s-1}\mu(t)\,dt
\le s\int_{t_-}^{t_+}t^{s-1}\mu_*(t)\,dt
-c\int_{t_-}^{t_+}\frac{t_+^s-t^s}{t}
\|\widetilde u_t\|_{W^{1,2}(\Sph)}^2\,dt.
\end{equation}
\end{theorem}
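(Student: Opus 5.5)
The plan is to turn the coarea inequality of Lemma~\ref{lem:coarea_differential} into a differential inequality for a single monotone quantity. The model $f\equiv1$ satisfies this quantity with equality, while $f$ satisfies it with a surplus governed by Theorem~\ref{thm:fuglede_bergman}. Integrating twice in the level variable should then give \eqref{eq:local_gap_new}.

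\textbf{Step 1 (the differential inequality).} Let $I(m)$ be the perimeter of the centered Bergman ball of invariant volume $m$, so that $I(\mu(B_r))=P(\mathbb{B}_r)$. For $f\equiv1$ the level sets are centered balls and $|\nabla_b u_*|_b$ is constant on each of them, so Cauchy--Schwarz is an equality in the proof of Lemma~\ref{lem:coarea_differential}. Hence
\[
-\mu_*'(t)=\frac{I(\mu_*(t))^2}{4n\alpha\,t\,\mu_*(t)}.
\]
For $f$, perimeter and invariant volume are $\Aut(\B_n)$-invariant, so $P(A_t)=P(\widetilde A_t)$ and $\mu(\widetilde A_t)=\mu(A_t)=\mu(B_{r(t)})$. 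By hypotheses (i)--(iii), Theorem~\ref{thm:fuglede_bergman} applies to $\widetilde A_t$ and gives
\[
P(A_t)\ge I(\mu(t))\bigl(1+c_1 D(t)\bigr),\qquad D(t):=\|\widetilde u_t\|_{W^{1,2}(\Sph)}^2.
\]
Inserting this into \eqref{eq:global_coarea_differential} and using $(1+x)^2\ge1+x$ yields
\[
-\mu'(t)\ge \frac{I(\mu(t))^2}{4n\alpha t\mu(t)}\bigl(1+c_1D(t)\bigr)\quad\text{for a.e. } t\in(t_-,t_+).
\]

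\textbf{Step 2 (a linearising potential).} Set $F(m):=\int_{m_1}^{m}4n\alpha\,\xi\,I(\xi)^{-2}\,d\xi$, which is $C^1$ and increasing on $(0,\infty)$. Then $-\tfrac{d}{dt}F(\mu_*(t))=1/t$, while $-\tfrac{d}{dt}F(\mu(t))\ge(1+c_1D(t))/t$ a.e.

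Since $\mu$ is nonincreasing, $F\circ\mu$ is nonincreasing, and its singular (Cantor and jump) part only decreases it further. Therefore
\[
F(\mu(t_-))-F(\mu(t))\ge\int_{t_-}^t\frac{1+c_1D(\tau)}{\tau}\,d\tau .
\]
Measurability of $D$ follows from hypothesis (iv). Combining this with the exact identity for $\mu_*$ and with the anchoring condition $F(\mu(t_-))\le F(\mu_*(t_-))$ gives
\[
F(\mu_*(t))-F(\mu(t))\ge c_1\int_{t_-}^t\frac{D(\tau)}{\tau}\,d\tau\ \ge 0 .
\]
In particular $\mu\le\mu_*$, which is Theorem~\ref{thm:local_dominance}.

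\textbf{Step 3 (back to volumes; the main obstacle).} By the mean value theorem, $F(\mu_*)-F(\mu)=F'(\xi)(\mu_*-\mu)$ for some $\xi\in[\mu(t),\mu_*(t)]$. So I need an upper bound $F'(\xi)\le L$ on the relevant range of volumes.

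The upper end of this range is harmless, since $\xi\le\mu_*(t_-)$ and $r(t)\le r_0$. The delicate end is small volume. For $n>1$ the ratio $\xi/I(\xi)^2$ behaves like $\xi^{(1-n)/n}$ as $\xi\to0$ and blows up, so I need a positive lower bound for $\mu(t)$.

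I would first try $\mu(t)\ge\mu(t_+)>0$, which holds because $t_+<t_0$ and $A_{t_+}$ is a nonempty open set. I would then try to make this lower bound depend only on the listed parameters. A natural route is to combine the radial-graph condition (iii), which forces $\widetilde A_t$ to contain a ball of radius $\tfrac{r(t)}2(1-\varepsilon_0)$, with a lower bound on $r(t)$ coming from the level value $t\le t_+<1$.

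If this uniformity fails, the constant $c$ would additionally depend on $\mu(t_+)$. I expect this step to be the genuine difficulty. With $L$ in hand, we get $\mu_*(t)-\mu(t)\ge (c_1/L)\int_{t_-}^t D(\tau)\tau^{-1}\,d\tau$.

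\textbf{Step 4 (Fubini).} Multiply by $s t^{s-1}$ and integrate over $[t_-,t_+]$. Fubini gives
\[
\int_{t_-}^{t_+}s t^{s-1}\int_{t_-}^{t}\frac{D(\tau)}{\tau}\,d\tau\,dt=\int_{t_-}^{t_+}\frac{t_+^s-\tau^s}{\tau}\,D(\tau)\,d\tau .
\]
This yields \eqref{eq:local_gap_new} with $c=c_1/L$.
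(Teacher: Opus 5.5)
Your Steps 1, 2 and 4 are correct and match the paper's argument. Your $F$ is, up to sign and an additive constant, the paper's $G(x)=\int_x^{\mu(t_-)}d\xi/\Phi(\xi)$, and your monotone-function handling of a possible singular part of $F\circ\mu$ is, if anything, more careful than the paper's.

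The gap is Step 3, which you leave open, and the route you suggest cannot close it. There is no lower bound on $\mu(t)$, or on $r(t)$, in terms of $(n,p,\alpha,q,r_0,t_-,t_+)$. Take $f\equiv c$ with $t_+<c^p<1$. Then:
\begin{itemize}
\item every $A_t$ is a centered ball, so (ii)--(iv) hold with $\psi_t=\mathrm{Id}$ and $\widetilde u_t\equiv0$;
\item (i) holds once $r_0$ is at least the model radius at level $t_-$, and $t_+<t_0=c^p$;
\item the anchoring holds because $\mu(t)=\mu_*(t/c^p)\le\mu_*(t)$.
\end{itemize}
Yet $\mu(t_+)=\mu_*(t_+/c^p)\to0$ as $c^p\downarrow t_+$. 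For $n\ge2$, $F'(\xi)=4n\alpha\xi/I(\xi)^2$ is unbounded as $\xi\downarrow0$. So a mean-value bound $F'\le L$ on $[\mu(t),\mu_*(t)]$ cannot be uniform over the admissible class. A constant depending on $\mu(t_+)$ depends on $f$, which is weaker than what the theorem asserts.

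What you are missing is that only a chord bound is needed, and the chord can be controlled from the larger endpoint $\mu_*(t)\ge\mu_*(t_+)$. Your own model identity already determines $F$. Since $\mu_*(t)=C_n\,(t^{-1/\alpha}-1)^n$ with $C_n=\mathcal{H}(\Sph)/(2n)$, and $-\frac{d}{dt}F(\mu_*(t))=1/t$, one gets $F(C_n x^n)=\alpha\log(1+x)+\mathrm{const}$ for all $x>0$. Write $\mu(t)=C_n x(t)^n$, $x_*(t)=t^{-1/\alpha}-1$, and $a:=t_+^{-1/\alpha}-1\le x_*(t)$. Using $x(t)\le x_*(t)$ from Step 2 together with $x_*^n-x^n\ge x_*^{n-1}(x_*-x)$, you obtain
\[
F(\mu_*(t))-F(\mu(t))=\alpha\log\frac{1+x_*(t)}{1+x(t)}\le\alpha\bigl(x_*(t)-x(t)\bigr)\le\frac{\alpha}{C_n a^{n-1}}\bigl(\mu_*(t)-\mu(t)\bigr).
\]
Equivalently, $F$ is concave and $F(0)$ is finite because $\xi^{(1-n)/n}$ is integrable at $0$. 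Hence the chord slope on $[\mu(t),\mu_*(t)]$ is at most $(F(\mu_*(t))-F(0))/\mu_*(t)$. Thus $L=\alpha/(C_n a^{n-1})$ depends only on $n,\alpha,t_+$, and your Step 4 then gives \eqref{eq:local_gap_new}. This is exactly how the paper closes the argument.
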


\begin{remark}[Uniformity of the constants]\label{rem:uniform_constants}
For fixed dimension $n$, the constants in the preceding local results may be
chosen locally uniformly in the continuous parameters. More precisely, if
$(p,\alpha,r_0,t_-,t_+)$, together with $q$ in
Theorem~\ref{thm:local_gap}, ranges over a fixed compact subset of the
corresponding open admissible parameter set
\[
p>0,\qquad \alpha>n,\qquad r_0>0,\qquad
0<t_-<t_+<1,\qquad q>p,
\]
with $q/p=\beta/\alpha$ where relevant, then the smallness thresholds may be
chosen uniformly positive, and the constants in the estimates may be chosen
uniformly. This follows from the compact-radius estimates in the radial-graph,
re-centering, and Fuglede-type stability arguments, together with the explicit
one-dimensional estimate in the proof below. No uniformity is asserted as the
parameters approach the boundary of the admissible range, for instance as
$\alpha\downarrow n$, $p\downarrow0$, $q\downarrow p$, or $t_+\uparrow1$.
\end{remark}

\subsection{The Hardy bridge and the scope of the local comparison}\label{sec:endpoint_local_chain}
We record separately the quantitative information supplied by a compact level
interval and the Hardy endpoint limit. Neither statement supplies control of
the layer-cake tails outside the perturbative interval.

\begin{theorem}[Local moment comparison and Hardy limit]\label{thm:local_chain}
Fix $n\ge 1$, $p>0$, $\alpha>n$, and a radial-parameter cap $R_0>0$.
Let $r:=p/\alpha$, let $f$ be holomorphic in $\B_n$, and set
\[
v_f(z):=|f(z)|^{r}(1-|z|^2),
\qquad\text{so that}\qquad
v_f(z)^{\alpha}=|f(z)|^{p}(1-|z|^2)^{\alpha}=u_f(z).
\]
Assume that the hypotheses of Setup~\ref{Setup}, with radial-parameter cap $R_0$,
hold on some nontrivial interval of regular values $(t_-,t_+)$ for $u_f$.
Assume in addition the anchoring inequality
\[
\mu(t_-)\le\mu_*(t_-).
\]

Then, for every $q>p$ and $\beta>\alpha$ with $q/p=\beta/\alpha$, the
truncated comparison \eqref{eq:local_contraction_new} holds.  If, in addition,
$f\in H^{rn}(\B_n)$, then the Hardy endpoint is recovered in the critical limit:
\begin{equation*}\label{eq:hardy_limit_statement}
\lim_{\gamma\downarrow n}\ \|f\|_{A_\gamma^{r\gamma}}=\|f\|_{H^{rn}}.
\end{equation*}
\end{theorem}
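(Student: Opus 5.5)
The theorem has two logically independent parts, and I would prove them separately.

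\emph{Truncated comparison.} The first assertion follows directly from Corollary~\ref{cor:local_contraction}. The hypotheses of Setup~\ref{Setup} with cap $R_0$, together with the anchoring inequality, are exactly the hypotheses of Theorem~\ref{thm:local_dominance}. That theorem gives $\mu(t)\le\mu_*(t)$ on $[t_-,t_+]$. Multiplying by the nonnegative weight $s\,t^{s-1}$ and integrating over $[t_-,t_+]$ gives \eqref{eq:local_contraction_new}.

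The identification with layer-cake pieces uses only the relation $q/p=\beta/\alpha=s$. This gives $|f|^q(1-|z|^2)^\beta=u_f^s$, so
\[
\int_{\B_n}u_f^s\,d\mu=s\int_0^\infty t^{s-1}\mu(t)\,dt .
\]
The same identity holds for $u_*$. The auxiliary function $v_f$ is only bookkeeping: since $u_f=v_f^\alpha$, we have $\{u_f>t\}=\{v_f>t^{1/\alpha}\}$.

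\emph{Hardy limit.} For the second assertion I would first write
\[
\|f\|^{r\gamma}_{A^{r\gamma}_\gamma}=c_{\gamma,n}\int_{\B_n}v_f^\gamma\,d\mu .
\]
Passing to polar coordinates gives
\[
\|f\|^{r\gamma}_{A^{r\gamma}_\gamma}=c'_{\gamma,n}\int_0^1 \rho^{2n-1}(1-\rho^2)^{\gamma-n-1}M_{r\gamma}(\rho)^{r\gamma}\,d\rho,
\qquad M_{s}(\rho)^{s}:=\int_{\Sph}|f(\rho\zeta)|^{s}\,d\sigma(\zeta).
\]
The constant is $c'_{\gamma,n}=2n\,c_{\gamma,n}$, since the weight is normalized so that it reproduces $1$ when $f\equiv1$. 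An explicit beta-function computation shows that $c_{\gamma,n}\sim(\gamma-n)/\mathrm{const}$ as $\gamma\downarrow n$. Consequently the probability measures
\[
d\lambda_\gamma(\rho)=c'_{\gamma,n}\rho^{2n-1}(1-\rho^2)^{\gamma-n-1}\,d\rho
\]
concentrate at $\rho=1$, and each of them assigns mass tending to $0$ to $[0,1-\delta]$. This is the Abelian step.

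It remains to show that
\[
\int_0^1 M_{r\gamma}(\rho)^{r\gamma}\,d\lambda_\gamma\to\|f\|_{H^{rn}}^{rn}.
\]
The main obstacle is that the exponent $r\gamma$ varies with $\gamma$ and lies \emph{above} $rn$, so $f\in H^{rn}$ does not directly control $M_{r\gamma}$.

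\emph{Lower bound.} By Fatou's lemma together with the monotonicity of $\rho\mapsto M_{s}(\rho)$, I would obtain $\liminf\ge\lim_{\rho\to1}M_{rn}(\rho)^{rn}=\|f\|^{rn}_{H^{rn}}$.

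\emph{Upper bound.} Here I would use the distribution function and the radial maximal theorem. Write $f^+=\sup_{\rho}|f(\rho\zeta)|\in L^{rn}(\sigma)$. Then for $\rho\le1$,
\[
|f(\rho\zeta)|^{r\gamma}\le (f^+)^{rn}\,\bigl(f^+\bigr)^{r(\gamma-n)} .
\]
The extra factor is controlled by the growth bound $|f(\rho\zeta)|\le C(1-\rho)^{-n/(rn)}$. Combined with the weight, the resulting contribution is $(1-\rho)^{-(\gamma-n)}$ up to constants, and because $(1-\rho^2)^{\gamma-n}$ absorbs this factor, the product stays bounded uniformly for $\gamma$ near $n$. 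Splitting at the level sets $\{f^+>\Lambda\}$, the part with $f^+\le\Lambda$ converges by dominated convergence, since $\Lambda^{r(\gamma-n)}\to1$. The tail contributes at most $C\int_{\{f^+>\Lambda\}}(f^+)^{rn}\,d\sigma$, which is small for large $\Lambda$.

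I expect this uniform control of the large values of $f^+$ to be the delicate step. If the growth-bound route is too lossy, I would instead compare
\[
\int_0^1 M_{r\gamma}^{r\gamma}\,d\lambda_\gamma
\quad\text{with}\quad
\int_0^1 M_{rn}^{rn}\,d\lambda_\gamma
\]
through the layer-cake formula in the level variable. The difference of the integrands is bounded by $(\gamma-n)\,r\,|\log|f||\,|f|^{r\gamma}$, and this would be shown to be negligible using the $L^{rn}$-integrability of $f^+$ together with the shrinking normalization. Either way, this gives $\limsup\le\|f\|^{rn}_{H^{rn}}$. Taking $1/(r\gamma)$-th powers then yields the stated limit.
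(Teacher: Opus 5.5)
Your treatment of the truncated comparison is the paper's own: it is Corollary~\ref{cor:local_contraction}. Your lower bound $\liminf\ge\|f\|_{H^{rn}}^{rn}$, obtained from monotone integral means and the concentration of $\lambda_\gamma$ at $\rho=1$, is also sound.

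The gap is in the upper bound, which is the real content of Proposition~\ref{prop:bergman_to_hardy_limit}. Your domination step diverges. Replace $|f(\rho\zeta)|^{r(\gamma-n)}$ by $C(1-\rho)^{-(\gamma-n)}$; you must keep $|f(\rho\zeta)|$ here rather than $f^+$, since the growth bound holds at a fixed radius. The factor $(1-\rho^2)^{\gamma-n}$ of the weight does absorb this. But the density of $\lambda_\gamma$ has exponent $\gamma-n-1$, so what remains is $c'_{\gamma,n}\rho^{2n-1}(1-\rho^2)^{-1}$. This has infinite mass on $[0,1)$ for every $\gamma$, and the factor $c'_{\gamma,n}\sim\gamma-n$ cannot repair an infinite integral. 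So the tail bound $C\int_{\{f^+>\Lambda\}}(f^+)^{rn}\,d\sigma$ does not follow from the argument you give. Your fallback has the same defect. The natural ray-wise estimate of $(\gamma-n)\,|\log|f||\,|f|^{r\gamma}$ produces $(\gamma-n)\int_{\Sph}(f^+)^{rn}\log^+f^+\,d\sigma$. This can be infinite for $f\in H^{rn}$, so the fallback also needs the unproved tail estimate.

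The missing idea is to apply the two pointwise bounds to the full power $|f|^{r\gamma}$, on complementary radial ranges, with a crossover that depends on $\zeta$. Put $y=1-\rho^2$ and $K=\|f\|_{H^{rn}}$, so the sharp growth estimate gives $|f(\rho\zeta)|\le Ky^{-1/r}$. Put $F=f^+(\zeta)$ and $y_c=(K/F)^r$. For $\gamma\in(n,n+1]$ one has $d\lambda_\gamma\le C(\gamma-n)y^{\gamma-n-1}\,dy$. Using $|f|\le F$ for $y<y_c$ and $|f|\le Ky^{-1/r}$ for $y\ge y_c$,
\[
\int_0^1|f(\rho\zeta)|^{r\gamma}\,d\lambda_\gamma(\rho)
\le C\Bigl(F^{r\gamma}y_c^{\gamma-n}+\frac{\gamma-n}{n}K^{r\gamma}y_c^{-n}\Bigr)
= C\Bigl(1+\frac{\gamma-n}{n}\Bigr)K^{r(\gamma-n)}F^{rn}.
\]
This is a majorant $C'(f^+)^{rn}\in L^1(\sigma)$, uniform in $\gamma$. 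It is the ray-wise counterpart of what the paper uses: the bound $D_f(t)\le C/t$ together with boundedness of $X_f$, fed into its Abelian lemma after the rescaling $tD_f(t)\to\kappa_n\|f\|_{H^{rn}}^{rn}$.

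With this majorant your route closes more simply than you planned. For a.e.\ $\zeta$ the inner integral tends to $|f^*(\zeta)|^{rn}$: split at some $\rho_1$ near $1$ and bound the $[0,\rho_1]$ part by $F^{r\gamma}\lambda_\gamma([0,\rho_1])\to0$. Dominated convergence on $\Sph$ then gives the upper and lower bounds at once.
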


\begin{proof}
The truncated assertion is Corollary~\ref{cor:local_contraction}.  To identify
the endpoint on the critical line $\alpha\downarrow n$, we invoke
Proposition~\ref{prop:bergman_to_hardy_limit}, which gives
\[
\lim_{\gamma\downarrow n}\ \|f\|_{A_\gamma^{r\gamma}}=\|f\|_{H^{rn}(\B_n)}
=\|f\|_{H^{nr}(\B_n)}.
\]
No monotonicity in $\gamma$ is needed for this limiting statement. In
particular, the critical limit should not be read as a comparison between a
fixed weighted Bergman norm and the Hardy norm.
\end{proof}
\begin{example}[Centrally symmetric superlevel sets]\label{ex:symmetric_superlevels}
Let $f$ be holomorphic in $\Bn$ and fix $r>0$. Assume that
\begin{equation*}\label{eq:mod_even}
|f(-z)|=|f(z)| \qquad \text{for all } z\in\Bn .
\end{equation*}
For each $t>0$ define the superlevel set
\[
A_t(f):=\{z\in\Bn:\ |f(z)|^{r}(1-|z|^2)>t\}.
\]
Then $A_t(f)$ is centrally symmetric for every $t$, namely
\[
A_t(f)=-A_t(f).
\]
Consequently, for every $t>0$ such that $0<\mu(A_t(f))<\infty$, the origin is
the holomorphic barycenter of $A_t(f)$ in the sense of
\cite{JacimovicKalajAnnFennMath}. Indeed, the change of variables
$z\mapsto -z$ and the evenness of the invariant density give
\[
\int_{A_t(f)} z\,(1-|z|^2)^{-n-1}\,d\nu(z)=0.
\]
Thus $0$ is a critical point of the barycenter functional $L_{A_t(f)}$.
Its uniqueness as a minimizer, proved in
\cite[Theorem~5.1]{JacimovicKalajAnnFennMath}, shows that the holomorphic
barycenter is $0$.
\end{example}

\subsection{A quantitative version of the comparison argument}\label{sec:quantitative_comparison_details}

We now give a streamlined proof of Theorems~\ref{thm:local_dominance} and
\ref{thm:local_gap}. The verification that the level sets of
\(
u(z)=|f(z)|^p(1-|z|^2)^\alpha
\)
are small radial graphs when \(f=1+\phi\) is \(C^1\)-close to \(1\) is
verified in Appendix~\ref{app:nearly_spherical_verification}. Under
Setup~\ref{Setup}, on the interval \((t_-,t_+)\) we may re-center each level set
\(
A_t=\{u>t\}
\)
by an automorphism \(\psi_t\in\Aut(\B_n)\) so that
\(
\widetilde A_t:=\psi_t(A_t)
\)
has Bergman barycenter \(0\), while
\[
\mu(\widetilde A_t)=\mu(A_t),\qquad P(\widetilde A_t)=P(A_t),
\]
and \(\partial \widetilde A_t\) remains in the small radial-graph regime required by
Theorem~\ref{thm:fuglede_bergman}.

\begin{proof}[Proof of Theorems~\ref{thm:local_dominance} and \ref{thm:local_gap}]
Fix a regular value \(t\in(t_-,t_+)\), and let
\[
\mu(t):=\mu(A_t),\qquad \mu_*(t):=\mu(\{(1-|z|^2)^\alpha>t\}).
\]
We also write \(r(t)\) for the unique radius such that
\[
\mu(A_t)=\mu(\mathbb B_{r(t)}).
\]

Lemma~\ref{lem:coarea_differential}, applied on the present level interval,
gives the basic differential inequality
\begin{equation}\label{eq:streamlined_diff}
-\mu'(t)\ge \frac{P(A_t)^2}{4n\alpha\,t\,\mu(t)}.
\end{equation}

We now use the geometric input. Since \(\widetilde A_t\) is centered and remains a small radial graph,
Theorem~\ref{thm:fuglede_bergman} applies to \(\widetilde A_t\), and by invariance under automorphisms,
\[
P(A_t)=P(\widetilde A_t)
\ge
S(r(t))\Bigl(1+c\,\|\widetilde u_t\|_{W^{1,2}(\Sph)}^2\Bigr),
\]
where \(S(r):=P(\mathbb B_r)\). After squaring and absorbing constants,
\[
P(A_t)^2
\ge
S(r(t))^2\Bigl(1+c\,\|\widetilde u_t\|_{W^{1,2}(\Sph)}^2\Bigr).
\]
Combining this with \eqref{eq:streamlined_diff} gives
\begin{equation}\label{eq:streamlined_diff_remainder}
-\mu'(t)\ge
\frac{S(r(t))^2}{4n\alpha\,t\,\mu(t)}
\Bigl(1+c\,\|\widetilde u_t\|_{W^{1,2}(\Sph)}^2\Bigr).
\end{equation}

Let
\[
V(r):=\mu(\mathbb B_r),
\qquad
\Phi(\xi):=\frac{S(V^{-1}(\xi))^2}{4n\alpha\,\xi}.
\]
Then \eqref{eq:streamlined_diff_remainder} becomes
\begin{equation}\label{eq:streamlined_phi}
-\mu'(t)\ge \frac{\Phi(\mu(t))}{t}
\Bigl(1+c\,\|\widetilde u_t\|_{W^{1,2}(\Sph)}^2\Bigr).
\end{equation}

For the extremizer \(f\equiv 1\), the superlevel sets are balls, so
\[
-\mu_*'(t)=\frac{\Phi(\mu_*(t))}{t}.
\]
Assume that \(t_-\) satisfies
\[
\mu(t_-)\le\mu_*(t_-).
\]
Now define
\[
G(x):=\int_x^{\mu(t_-)}\frac{d\xi}{\Phi(\xi)}.
\]
On the fixed compact interval of regular levels, the coarea formula implies
that $\mu$ is absolutely continuous; the model distribution $\mu_*$ is smooth.
Since $\Phi$ is positive and continuous on the corresponding compact volume
range, $G\circ\mu$ and $G\circ\mu_*$ are absolutely continuous there. Since
\(\Phi>0\), the function \(G\) is strictly decreasing. From
\eqref{eq:streamlined_phi} we obtain
\[
\frac{d}{dt}G(\mu(t))
=
-\frac{\mu'(t)}{\Phi(\mu(t))}
\ge
\frac1t
\Bigl(1+c\,\|\widetilde u_t\|_{W^{1,2}(\Sph)}^2\Bigr),
\]
whereas for the model profile,
\[
\frac{d}{dt}G(\mu_*(t))=\frac1t.
\]
Because $G$ is decreasing, the anchoring inequality gives
\[
G(\mu(t_-))=0\ge G(\mu_*(t_-)).
\]
Integrating from \(t_-\) to \(t\in[t_-,t_+]\),
we get
\[
G(\mu(t))\ge G(\mu_*(t)).
\]
Because \(G\) is strictly decreasing, this implies
\[
\mu(t)\le \mu_*(t)\qquad\text{for all }t\in[t_-,t_+],
\]
which proves Theorem~\ref{thm:local_dominance}.

Multiplying the distribution comparison by $s t^{s-1}$ and integrating on
$[t_-,t_+]$ proves Corollary~\ref{cor:local_contraction}.  Notice that this
step deliberately leaves the two layer-cake tails untouched.

Finally, keeping the positive remainder in \eqref{eq:streamlined_phi} gives
\[
G(\mu(t))-G(\mu_*(t))\ge c\int_{t_-}^{t}
\frac{\|\widetilde u_\tau\|_{W^{1,2}(\Sph)}^2}{\tau}\,d\tau.
\]
We next convert the left-hand side into a volume gap with a constant independent
of a possible small value of $\mu(t)$. Put
\[
\omega_{2n-1}:=\mathcal H(\Sph),\qquad C_n:=\frac{\omega_{2n-1}}{2n}.
\]
If $\rho=\tanh(r/2)$ and $x=\rho^2/(1-\rho^2)$, direct integration of the
invariant measure and the ball-perimeter formula give
\[
V(r)=C_nx^n,
\qquad
S(r)=\omega_{2n-1}x^{n-\frac12}(1+x)^{\frac12},
\qquad
\Phi(V(r))=\frac{\omega_{2n-1}}{2\alpha}x^{n-1}(1+x).
\]
Consequently, under the change of variables $\xi=C_nx^n$,
\[
\frac{d\xi}{\Phi(\xi)}=\alpha\frac{dx}{1+x}.
\]
For $t\in[t_-,t_+]$, define
\[
x(t):=\left(\frac{\mu(t)}{C_n}\right)^{1/n},
\qquad
x_*(t):=\left(\frac{\mu_*(t)}{C_n}\right)^{1/n}
=t^{-1/\alpha}-1.
\]
The already proved dominance gives $0\le x(t)\le x_*(t)$, while
\[
x_*(t)\ge a:=t_+^{-1/\alpha}-1>0.
\]
The preceding explicit formula therefore yields
\begin{align*}
0&\le G(\mu(t))-G(\mu_*(t))
=\alpha\log\frac{1+x_*(t)}{1+x(t)}
\le\alpha\bigl(x_*(t)-x(t)\bigr),\\
\mu_*(t)-\mu(t)
&=C_n\bigl(x_*(t)^n-x(t)^n\bigr)
\ge C_na^{n-1}\bigl(x_*(t)-x(t)\bigr).
\end{align*}
Hence, uniformly for $t\in[t_-,t_+]$,
\[
\mu_*(t)-\mu(t)
\ge \frac{C_na^{n-1}}{\alpha}
\bigl(G(\mu(t))-G(\mu_*(t))\bigr).
\]
Combining this estimate with the retained remainder, multiplying by
$s t^{s-1}$, and applying Tonelli's theorem yields
\[
s\int_{t_-}^{t_+}t^{s-1}\bigl(\mu_*(t)-\mu(t)\bigr)\,dt
\ge c\int_{t_-}^{t_+}\frac{t_+^s-t^s}{t}
\|\widetilde u_t\|_{W^{1,2}(\Sph)}^2\,dt.
\]
This is \eqref{eq:local_gap_new}.
\end{proof}

\begin{proposition}[Hardy limit of normalized weighted Bergman norms as $\alpha\downarrow n$]
\label{prop:bergman_to_hardy_limit}
Fix $n\ge1$ and $r>0$. Let $f$ be holomorphic in $\B_n$ and assume that
$f\in H^{rn}(\B_n)$.
Set $p(\alpha):=r\alpha$ and $\varepsilon:=\alpha-n>0$.

Let $dv_g$ be the invariant (Bergman) volume element. Define the normalized weighted probability measures
\[
d\mu_\alpha(z):=c_{\alpha,n}\,(1-|z|^2)^{\alpha}\,dv_g(z),
\qquad
c_{\alpha,n}:=\Bigl(\int_{\B_n}(1-|w|^2)^{\alpha}\,dv_g(w)\Bigr)^{-1},
\qquad \alpha>n,
\]
and the norms
\[
\|f\|_{A_\alpha^{p(\alpha)}}^{p(\alpha)}
:=\int_{\B_n} |f(z)|^{p(\alpha)}\,d\mu_\alpha(z).
\]

Then $f\in A_\alpha^{r\alpha}(\B_n)$ for every $\alpha>n$, and
\[
\lim_{\alpha\downarrow n}\ \|f\|_{A_\alpha^{r\alpha}}
=
\|f\|_{H^{rn}}.
\]
\end{proposition}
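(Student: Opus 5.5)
The plan is to write the normalized weighted norm in polar coordinates and read it as an Abelian mean of integral means. With $dv_g=(1-|z|^2)^{-n-1}d\nu$ and $d\nu=2n\,\nu(\B_n)\rho^{2n-1}d\rho\,d\sigma(\zeta)$, put $\varepsilon=\alpha-n$. Then
\[
\|f\|_{A^{r\alpha}_\alpha}^{r\alpha}=\int_0^1 M_{r\alpha}(f,\rho)^{r\alpha}\,w_\varepsilon(\rho)\,d\rho,
\qquad
w_\varepsilon(\rho)=\kappa_\varepsilon\,\rho^{2n-1}(1-\rho^2)^{\varepsilon-1},
\]
where $M_q(f,\rho)^q=\int_{\Sph}|f(\rho\zeta)|^q\,d\sigma$ and $\kappa_\varepsilon$ normalizes $w_\varepsilon$ to be a probability density. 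A Beta-function computation gives $\kappa_\varepsilon\sim 2\varepsilon$ as $\varepsilon\downarrow0$. Hence $w_\varepsilon$ concentrates at $\rho=1$: for each fixed $\rho_0<1$, $\int_0^{\rho_0}w_\varepsilon\to0$.

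\emph{Step 1 (membership).} To show $f\in A^{r\alpha}_\alpha$, I will invoke the Hardy--Littlewood--Flett embedding on the ball. For $0<p<q$ and $\lambda\ge p$ it says
\[
\int_0^1 M_q(f,\rho)^\lambda(1-\rho)^{\lambda n(1/p-1/q)-1}\,d\rho\le C\,\|f\|_{H^p}^\lambda
\]
(Ahern, Zhu). I take $p=rn$ and $q=\lambda=r\alpha$. The exponent is then $r\alpha\, n\bigl(\tfrac1{rn}-\tfrac1{r\alpha}\bigr)-1=\varepsilon-1$, which is exactly the weight above. This gives finiteness for every $\alpha>n$.

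\emph{Step 2 (reduction to bounded functions).} I will first prove the limit for a polynomial $P$, or more generally any $P$ holomorphic on a neighbourhood of $\overline{\B_n}$. There $|P|^{r\alpha}\to|P|^{rn}$ uniformly on $\overline{\B_n}$ as $\alpha\downarrow n$. Also $\rho\mapsto M_{rn}(P,\rho)^{rn}$ is continuous up to $\rho=1$ with value $\|P\|_{H^{rn}}^{rn}$. Concentration of $w_\varepsilon$ at $\rho=1$ therefore gives $\|P\|_{A^{r\alpha}_\alpha}^{r\alpha}\to\|P\|_{H^{rn}}^{rn}$. This is the Abelian step: a probability-mean of a function with a boundary limit recovers that limit.

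For general $f\in H^{rn}$, polynomials are dense in $H^{rn}$ since $rn<\infty$, so I write $f=P+h$ with $\|h\|_{H^{rn}}<\delta$. I then use the ($p$-)quasi-triangle inequality in $A^{r\alpha}_\alpha$ and in $H^{rn}$, together with the bound
\[
\|h\|_{A^{r\alpha}_\alpha}\le C\,\|h\|_{H^{rn}},\qquad C\ \text{independent of } \alpha\in(n,n+1].
\]
This yields $\limsup_{\alpha\downarrow n}\bigl|\|f\|_{A^{r\alpha}_\alpha}-\|f\|_{H^{rn}}\bigr|\lesssim\delta^{\min(1,rn)}$. Letting $\delta\to0$ proves the proposition. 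The exponents $r\alpha\to rn$ vary, but the quasi-triangle constants stay bounded near $\alpha=n$, so this causes no harm.

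\emph{Main obstacle.} The hardest point is the uniform bound on $C$ in Step 2. The product of $\kappa_\varepsilon\sim\varepsilon$ and the Flett constant for weight $(1-\rho)^{\varepsilon-1}$ must stay bounded as $\varepsilon\downarrow0$. This is borderline, since the cruder pointwise growth estimate $M_{r\alpha}^{r\alpha}\lesssim(1-\rho)^{-\varepsilon}$ only produces a logarithmically divergent integral.

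My first attempt will bypass the constant. I will use the radial maximal function $f^+(\zeta)=\sup_\rho|f(\rho\zeta)|$, which lies in $L^{rn}(\sigma)$ by the Hardy maximal theorem, and write
\[
|h|^{r\alpha}=|h|^{rn}\,|h|^{r\varepsilon}\le (h^+)^{rn}\,|h|^{r\varepsilon}.
\]
Then $\varepsilon\int(1-\rho)^{\varepsilon-1}|h(\rho\zeta)|^{r\varepsilon}\,d\rho$ is controlled by $\max(1,h^+(\zeta))^{r\varepsilon}$ times a bounded quantity. Integrating over $\zeta$ gives
\[
\|h\|_{A^{r\alpha}_\alpha}^{r\alpha}\lesssim\int_{\Sph}(h^+)^{rn}\max(1,h^+)^{r\varepsilon}\,d\sigma .
\]
Provided $f^+$ has slightly better integrability than $L^{rn}$ (or after a further truncation at a level $K$, with the part where $|h|\le K$ handled by dominated convergence), this tends to something $\lesssim\|h\|_{H^{rn}}^{rn}$. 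If this estimate resists, I will fall back on tracking the constant in Flett's proof: it comes from a one-dimensional Hardy inequality whose constant is explicit in the weight exponent.
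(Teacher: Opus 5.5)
Your overall architecture (polar coordinates, the Abelian limit for polynomials, density of polynomials in $H^{rn}$, and the quasi-triangle inequality) is a legitimate alternative to the paper's direct distribution-function argument. However, it rests entirely on the uniform estimate $\|h\|_{A^{r\alpha}_\alpha}\le C\|h\|_{H^{rn}}$ for $\alpha\in(n,n+1]$, and that estimate is not proved. The argument you propose for it fails. Bounding $|h(\rho\zeta)|^{r\varepsilon}$ by $h^+(\zeta)^{r\varepsilon}$ at every radius yields $\int_{\Sph}(h^+)^{rn}\max(1,h^+)^{r\varepsilon}\,d\sigma\ge\int_{\{|h^*|\ge1\}}|h^*|^{r\alpha}\,d\sigma$. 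This is $+\infty$ for every $\varepsilon>0$ as soon as $f^*\notin L^q(\sigma)$ for all $q>rn$, and $h=f-P$ inherits that property since $P$ is bounded. The hypothesis is only $f\in H^{rn}$, so this is not a side case. Truncating at a level $K$ disposes only of $\{|h|\le K\}$. The set $\{|h|>K\}$ lies near $\Sph$, where $w_\varepsilon$ concentrates, and there the growth bound alone gives the logarithmic divergence you already noticed. The Flett fallback is not carried out. What it would have to deliver, an embedding constant of order $1/\varepsilon$ for the weight $(1-\rho)^{\varepsilon-1}$, is exactly the missing statement.

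The missing idea is to apply the two available bounds to the \emph{full} power $|h|^{r\alpha}$, on complementary radial ranges chosen depending on $\zeta$. Put $A=h^+(\zeta)$, $B=\|h\|_{H^{rn}}$ and $y=1-\rho^2$. The maximal function and the sharp pointwise estimate $|h(z)|\le B(1-|z|^2)^{-1/r}$ give $|h(\rho\zeta)|\le\min(A,By^{-1/r})$. After the substitution, $w_\varepsilon(\rho)\,d\rho\le C\varepsilon\,y^{\varepsilon-1}\,dy$. Splitting at $y_0=(B/A)^r$ then gives
\[
\varepsilon\int_0^{y_0}y^{\varepsilon-1}A^{r\alpha}\,dy=A^{rn}B^{r\varepsilon},
\qquad
\varepsilon\int_{y_0}^1 y^{\varepsilon-1}B^{r\alpha}y^{-\alpha}\,dy\le\frac{\varepsilon}{n}A^{rn}B^{r\varepsilon}.
\]
If $y_0\ge1$ only the first piece occurs, with the same bound. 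The point is that the factor $y_0^{\varepsilon}$ cancels $A^{r\varepsilon}$ exactly. Integrating in $\zeta$ and using $\|h^+\|_{L^{rn}(\sigma)}\lesssim\|h\|_{H^{rn}}$ gives $\|h\|_{A^{r\alpha}_\alpha}^{r\alpha}\le C\|h\|_{H^{rn}}^{r\alpha}$ with $C$ independent of $\alpha\in(n,n+1]$. This also makes your Step~1 unnecessary.

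This is the same mechanism as in the paper. There, $tD_f(t)\le\kappa_n\|M_f\|_{L^1(\Sph)}$ together with $\sup X_f\le\|f\|_{H^{rn}}^{rn}$ controls $\delta\int X_f^{1+\delta}\,dv_g$. The paper then proves the limit for $f$ itself, via the Abelian lemma and dominated convergence in the scaled distribution formula, so no density step is needed. With the estimate above inserted, your density route becomes a correct alternative. It trades the paper's small-level asymptotics of $D_f$ for the elementary boundary continuity of the integral means of polynomials.
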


\begin{proof}
Put $s:=rn$ and
\[
\delta:=\frac{\alpha-n}{n}>0,
\qquad
X_f(z):=|f(z)|^s(1-|z|^2)^n.
\]
Then $r\alpha=s(1+\delta)$ and
\[
|f(z)|^{r\alpha}(1-|z|^2)^\alpha=X_f(z)^{1+\delta}.
\]
If $X_1(z):=(1-|z|^2)^n$, the definition of $c_{\alpha,n}$ gives the exact identity
\begin{equation}\label{eq:critical_ratio}
\|f\|_{A_\alpha^{r\alpha}}^{r\alpha}
=
\frac{\displaystyle\int_{\B_n}X_f^{1+\delta}\,dv_g}
{\displaystyle\int_{\B_n}X_1^{1+\delta}\,dv_g}.
\end{equation}

We first determine the small-level distribution of $X_f$.  Let
\[
D_f(t):=v_g(\{z\in\B_n:X_f(z)>t\}),\qquad t>0.
\]
The standard Hardy maximal theorem gives radial boundary values $f^*$ and an
integrable radial maximal function
\[
M_f(\zeta):=\sup_{0<\rho<1}|f(\rho\zeta)|^s\in L^1(\Sph).
\]
The pointwise Hardy estimate also shows that $X_f$ is bounded on $\B_n$.

Write $z=\rho\zeta$ and set $y=1-\rho^2$.  After absorbing the fixed
normalizing constants into $K_n>0$, polar coordinates give
\[
dv_g=K_n(1-y)^{n-1}y^{-n-1}\,dy\,d\sigma(\zeta).
\]
Making the change of variables $y=t^{1/n}x$, we obtain
\begin{align}
tD_f(t)
&=K_n\int_{\Sph}\int_0^{t^{-1/n}}
\mathbf 1_{\{x^n|f(\sqrt{1-t^{1/n}x}\,\zeta)|^s>1\}} \notag\\
&\hspace{35mm}\times(1-t^{1/n}x)^{n-1}x^{-n-1}\,dx\,d\sigma(\zeta).
\label{eq:scaled_distribution}
\end{align}
For almost every $\zeta$ and every fixed $x>0$, the integrand converges to
\[
\mathbf 1_{\{x^n|f^*(\zeta)|^s>1\}}x^{-n-1}.
\]
Dominated convergence applies on each compact $x$-interval.  The large-$x$
tail is bounded uniformly by $1/(nb^n)$ on $(b,\infty)$.  For the small-$x$
tail, the indicator in \eqref{eq:scaled_distribution} implies
$x^nM_f(\zeta)>1$, and hence
\[
\int_0^a\mathbf 1_{\{x^nM_f(\zeta)>1\}}x^{-n-1}\,dx
=\frac1n\bigl(M_f(\zeta)-a^{-n}\bigr)_+.
\]
Its integral tends to zero as $a\downarrow0$ because $M_f\in L^1(\Sph)$.
Letting first $t\downarrow0$ and then $a\downarrow0$, $b\uparrow\infty$, we obtain
\begin{equation}\label{eq:small_level_asymptotic}
\lim_{t\downarrow0}tD_f(t)
=\kappa_n\int_{\Sph}|f^*(\zeta)|^s\,d\sigma(\zeta)
=\kappa_n\|f\|_{H^s}^s,
\end{equation}
where $\kappa_n=K_n/n$.  In particular,
\begin{equation}\label{eq:model_small_level_asymptotic}
\lim_{t\downarrow0}tD_1(t)=\kappa_n.
\end{equation}
The same maximal-function estimate also gives $D_f(t)\le C/t$ for $t>0$.

We use the following elementary Abelian fact.  If $X\ge0$ is bounded,
$D_X(t)<\infty$ for $t>0$, and $tD_X(t)\to A$, then layer cake gives
\[
\delta\int X^{1+\delta}\,d\mu
=\delta(1+\delta)\int_0^{\|X\|_\infty}t^\delta D_X(t)\,dt
\longrightarrow A.
\]
Indeed, on $(0,\eta)$ this follows by sandwiching $tD_X(t)$ between
$A-\varepsilon$ and $A+\varepsilon$, since
$\delta\int_0^\eta t^{\delta-1}\,dt=\eta^\delta\to1$; the contribution of
$[\eta,\|X\|_\infty]$ tends to zero.  Notice that the factor $\delta$ is
essential.

Applying this fact to $X_f$ and $X_1$ and using
\eqref{eq:small_level_asymptotic}--\eqref{eq:model_small_level_asymptotic}, we find
\[
\lim_{\delta\downarrow0}
\frac{\int_{\B_n}X_f^{1+\delta}\,dv_g}
{\int_{\B_n}X_1^{1+\delta}\,dv_g}
=\|f\|_{H^s}^s.
\]
Moreover, boundedness of $X_f$ and the estimate $D_f(t)\le C/t$ show by layer
cake that
\[
\int_{\B_n} X_f^{1+\delta}\,dv_g<\infty
\qquad(\delta>0).
\]
Thus
$f\in A_\alpha^{r\alpha}$ for every $\alpha>n$.  Finally,
\eqref{eq:critical_ratio} and $r\alpha\to s$ yield
\[
\lim_{\alpha\downarrow n}\|f\|_{A_\alpha^{r\alpha}}
=\|f\|_{H^s}.
\]
\end{proof}

\subsection{Hinge reduction and the limitation of the local comparison}\label{sec:hinge_limitation}

Assume that $f=1+\phi$ with
$\|\phi\|_{C^1(\overline{\B_{\rho}})}\ll1$ for some Euclidean radius
$0<\rho<1$, under the hypotheses of Corollary~\ref{cor:local_contraction}, and
assume in addition that $\|f\|_{A^p_\alpha(\B_n)}=1$. Set
\[
w(z):=(1-|z|^2)^\alpha,\qquad U_f(z):=|f(z)|^p\,w(z),\qquad d\mu:=dv_g .
\]
The normalization $\|f\|_{A^p_\alpha(\B_n)}=1$ gives the mean identity
\begin{equation*}\label{eq:mean_match}
\int_{\B_n}U_f\,d\mu=\int_{\B_n}w\,d\mu.
\end{equation*}
Lemma~\ref{lem:sharp_pointwise} gives $0\le U_f\le1$, while $0\le w\le1$.
By Proposition~\ref{prop:hinge_reduction}, to prove Conjecture~\ref{conj:convex_functional} it suffices to show the hinge inequalities
\begin{equation}\label{eq:hinge_goal}
\int_{\B_n}(U_f-t)_+\,d\mu\le \int_{\B_n}(w-t)_+\,d\mu,\qquad t\in[0,1].
\end{equation}
Using $(x-t)_+=\int_t^\infty\mathbf 1_{\{x>s\}}\,ds$ we obtain the layer-cake formulas
\begin{equation}\label{eq:layer_cake}
\int_{\B_n}(U_f-t)_+\,d\mu=\int_t^1 \mu(A_s(f))\,ds,\qquad
A_s(f):=\{z\in\B_n:\ U_f(z)>s\},
\end{equation}
and similarly
\[
\int_{\B_n}(w-t)_+\,d\mu=\int_t^1 \mu(\B_{r_*(s)})\,ds,
\qquad (1-\tanh^2(r_*(s)/2))^\alpha=s.
\]
Thus \eqref{eq:hinge_goal} would follow from the distribution inequality
\emph{for all} $s\ge t$.  Theorem~\ref{thm:local_dominance}, however, proves only
\begin{equation}\label{eq:distribution_dominance}
\mu(A_s(f))\le \mu(\B_{r_*(s)})\qquad\text{for a.e.\ }s\in[t_-,t_+].
\end{equation}
This does not control the portions $[t,t_-)$ and $(t_+,1]$ of the
layer-cake integral when they are present.

Indeed, integrating \eqref{eq:distribution_dominance} yields only the
truncated comparison
\[
 \int_{t_-}^{t_+}\mu(A_s(f))\,ds
 \le \int_{t_-}^{t_+}\mu(\B_{r_*(s)})\,ds,
 \qquad (1-\tanh^2(r_*(s)/2))^\alpha=s.
\]
It supplies no information about the portions $[t,t_-)$ and
$(t_+,1]$ of the layer-cake formula. No second normalization of the model
radius is introduced here.
Thus the local hypotheses alone do not prove
Conjecture~\ref{conj:convex_functional}. Proposition~\ref{prop:hinge_reduction}
records the abstract hinge step and makes precise which additional global
distribution information would be needed.

\begin{proposition}[Hinge-function reduction]\label{prop:hinge_reduction}
Fix two measurable functions $X,Y:\B_n\to[0,1]$ such that $X,Y\in L^1(d\mu)$ and
\begin{equation}\label{eq:same_mean}
\int_{\B_n} X\,d\mu=\int_{\B_n} Y\,d\mu.
\end{equation}
For $t\in[0,1]$ define the \emph{hinge function}
\[
h_t(x):=(x-t)_+=\max\{x-t,0\}.
\]
Assume that for every $t\in[0,1]$ one has
\begin{equation}\label{eq:hinge_dom}
\int_{\B_n} h_t(X)\,d\mu \le \int_{\B_n} h_t(Y)\,d\mu.
\end{equation}
Then, for every convex $\Phi:[0,1]\to\R$ satisfying $\Phi(0)=0$ and
$\Phi(X),\Phi(Y)\in L^1(d\mu)$, we have
\begin{equation}\label{eq:convex_dom}
\int_{\B_n}\Phi(X)\,d\mu \le \int_{\B_n}\Phi(Y)\,d\mu.
\end{equation}
\end{proposition}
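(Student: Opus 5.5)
The plan is to use the classical integral representation of a convex function as a linear part plus a superposition of hinge functions, and then integrate against $d\mu$, using Tonelli's theorem for the nonnegative hinge part. The linear part is handled by the mean identity \eqref{eq:same_mean}; the normalization $\Phi(0)=0$ is essential because $\mu(\B_n)=\infty$. First I treat the model case in which $\Phi$ has finite one-sided derivatives at both endpoints and is continuous on $[0,1]$. Let $\nu:=\Phi''$ be the nonnegative Radon measure on $(0,1)$ given by the distributional second derivative. For $x\in[0,1]$ one has
\[
\Phi(x)=\Phi(0)+\Phi'_+(0)\,x+\int_{(0,1)}(x-t)_+\,d\nu(t).
\]
Since $\Phi(0)=0$, this reads $\Phi(x)=\Phi'_+(0)\,x+\int_{(0,1)} h_t(x)\,d\nu(t)$. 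I then substitute $x=X(z)$ and $x=Y(z)$ and integrate over $\B_n$. The linear terms are finite because $X,Y\in L^1(d\mu)$, and they coincide by \eqref{eq:same_mean}. The hinge terms are nonnegative, so Tonelli's theorem gives
\[
\int_{\B_n}\int_{(0,1)} h_t(X)\,d\nu(t)\,d\mu=\int_{(0,1)}\Bigl(\int_{\B_n}h_t(X)\,d\mu\Bigr)d\nu(t).
\]
The same identity holds for $Y$. The inner integrals are finite, since $h_t(X)\le X$, and by \eqref{eq:hinge_dom} they are ordered pointwise in $t$. Integrating this ordering against $\nu\ge0$ gives \eqref{eq:convex_dom} in the model case.

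Next I remove the regularity assumptions by approximation. For small $\varepsilon>0$, let $\Phi_\varepsilon$ coincide with $\Phi$ on $[\varepsilon,1-\varepsilon]$ and be extended by its supporting lines (slopes $\Phi'_+(\varepsilon)$ and $\Phi'_-(1-\varepsilon)$) on $[0,\varepsilon]$ and $[1-\varepsilon,1]$. Then set $\Psi_\varepsilon:=\Phi_\varepsilon-\Phi_\varepsilon(0)$. Each $\Psi_\varepsilon$ is convex and Lipschitz on $[0,1]$ with $\Psi_\varepsilon(0)=0$, so the model case gives $\int\Psi_\varepsilon(X)\,d\mu\le\int\Psi_\varepsilon(Y)\,d\mu$. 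As $\varepsilon\downarrow0$, $\Phi_\varepsilon\to\Phi$ on $(0,1)$, and $\Phi_\varepsilon(0)\to\Phi(0^+):=\lim_{x\downarrow0}\Phi(x)$. Possible upward jumps of $\Phi$ at the endpoints are dealt with separately. At $0$ the set $\{X=0\}$ contributes $\Phi(0)=0$ on both sides. A jump at $1$ contributes $(\Phi(1)-\Phi(1^-))\mu(\{X=1\})$. This term is controlled because the hinge inequality as $t\uparrow1$ forces $\mu(\{X=1\})\le\mu(\{Y=1\})$: divide $\int(X-t)_+\,d\mu\le\int(Y-t)_+\,d\mu$ by $1-t$ and let $t\uparrow1$, using dominated convergence since $(X-t)_+/(1-t)\le X/(1-t_0)$ for $t\ge t_0$.

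The main obstacle is the passage to the limit in the approximation, because $\mu$ has infinite mass and constants are not integrable. I intend to use the convexity bounds
\[
\Phi(0^+)\,(1-x)+\Phi(1)\,x\ \ge\ \Psi_\varepsilon(x)\ \ge\ \ell(x),
\]
where $\ell$ is a fixed affine minorant of $\Phi$ on $(0,1)$. Near $x=0$ this gives $|\Psi_\varepsilon(x)|\le C\bigl(x+|\Phi(x)|\bigr)$ on $\{X>0\}$ uniformly in small $\varepsilon$, with the $|\Phi|$ term absorbing the possibly infinite slope at $0$. Since $\Phi(X),X\in L^1(d\mu)$, dominated convergence then applies on $\{X>0\}$, and likewise for $Y$. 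I expect the delicate case to be an infinite slope $\Phi'_+(0)=-\infty$, where $\Psi_\varepsilon(x)$ for $x<\varepsilon$ must be dominated by $|\Phi(x)|$. If that domination fails directly, my fallback is to split $\Phi=\Phi_1+\Phi_2$, with $\Phi_2$ convex and nonincreasing near $0$, and to treat $\Phi_2$ by monotone convergence applied to the nonnegative quantities $\Psi_\varepsilon-\Phi$.
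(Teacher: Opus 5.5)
Your model case is the paper's argument: the representation $\Phi(x)=\Phi'_+(0)x+\int_{(0,1)}(x-t)_+\,d\Phi''(t)$, Tonelli on the hinge part, and the mean identity on the linear part. Your supporting-line approximation is the same idea as the paper's truncation of $\Phi'$. There is, however, a genuine gap in the limit at the endpoint $0$.

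\textbf{The gap.} Since $\Phi_\varepsilon(0)\to\Phi(0^+)$, your normalized functions satisfy $\Psi_\varepsilon\to\Phi-\Phi(0^+)$ on $(0,1)$, not $\Psi_\varepsilon\to\Phi$. Your displayed sandwich is really a bound for $\Phi_\varepsilon$, not for $\Psi_\varepsilon=\Phi_\varepsilon-\Phi_\varepsilon(0)$; at $x=0$ it would assert $\Phi(0^+)\ge 0$. Suppose $\Phi$ jumps at $0$, i.e.\ $\Phi(0^+)<0=\Phi(0)$. Then the discrepancy sits on $\{X>0\}$, not on $\{X=0\}$ as your remark suggests. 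Letting $\varepsilon\downarrow0$, and ignoring the jump at $1$, yields only
\[
\int_{\B_n}\Phi(X)\,d\mu-\Phi(0^+)\,\mu(\{X>0\})\le\int_{\B_n}\Phi(Y)\,d\mu-\Phi(0^+)\,\mu(\{Y>0\}).
\]
Both measures are finite here, because $|\Phi|\ge|\Phi(0^+)|/2$ on some interval $(0,\eta)$ and $\Phi(X),X\in L^1(d\mu)$. To reach \eqref{eq:convex_dom} you therefore need $\mu(\{X>0\})\ge\mu(\{Y>0\})$, and nothing in your proposal supplies it. This is not a technicality. For the convex function $\Phi=-\mathbf 1_{(0,1]}$ one has $\Psi_\varepsilon\equiv 0$, so your scheme yields only $0\le 0$, whereas \eqref{eq:convex_dom} is then exactly the inequality $\mu(\{X>0\})\ge\mu(\{Y>0\})$.

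\textbf{How to close it.} The missing inequality does follow from the hypotheses, by an argument dual to yours at $1$ but now using the mean identity. Since $\min(x,a)=x-(x-a)_+$, the identities \eqref{eq:same_mean} and \eqref{eq:hinge_dom} give $\int\min(X,a)\,d\mu\ge\int\min(Y,a)\,d\mu$ for every $a\in(0,1]$. Divide by $a$ and let $a\downarrow 0$. Monotone convergence applies because $\min(X,a)/a\uparrow\mathbf 1_{\{X>0\}}$, and it yields $\mu(\{X>0\})\ge\mu(\{Y>0\})$. With this, both endpoint corrections have the right sign and the proof closes.

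\textbf{Domination.} The domination you were unsure about holds in general, so no splitting fallback is needed.
\begin{enumerate}[(i)]
\item For $0<x<\varepsilon$, $\Psi_\varepsilon(x)=\Phi'_+(\varepsilon)x\ge\Phi(x)$, by monotonicity of the chord slopes $\Phi(x)/x$ (using $\Phi(0)=0$).
\item For $\varepsilon\le x\le 1-\varepsilon$, $\Psi_\varepsilon(x)=\Phi(x)-\Phi_\varepsilon(0)\ge\Phi(x)$, since $\Phi_\varepsilon(0)\le\Phi(0^+)\le0$.
\item Convexity gives $\Psi_\varepsilon(x)\le x\,\Psi_\varepsilon(1)$ with $\Psi_\varepsilon(1)$ bounded above uniformly.
\item The $\Psi_\varepsilon$ are uniformly bounded on $[\tfrac12,1]$.
\end{enumerate}
Hence $|\Psi_\varepsilon|\le|\Phi|+Cx$.

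\textbf{Comparison with the paper.} The paper's sketch has the same blind spot. Its truncations $\Phi_k$ converge to $\Phi-\Phi(0^+)$ on $(0,1)$, not to $\Phi$, when $\Phi$ jumps at an endpoint. So your explicit treatment of the endpoint $1$ is already more careful than the paper's, and the endpoint $0$ needs the extra step above in either version.
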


\begin{proof}
We first suppose that the one-sided derivatives of $\Phi$ at the endpoints
are finite. Its distributional second derivative $\Phi''$ is then a finite
nonnegative Borel measure on $(0,1)$, and the standard integral representation
for convex functions gives
\begin{equation}\label{eq:choquet}
\Phi(x)=\Phi'_+(0)\,x+\int_{(0,1)} (x-t)_+\,d\Phi''(t),
\qquad x\in[0,1].
\end{equation}
All terms are integrable because $(X-t)_+\le X$ and $(Y-t)_+\le Y$.
Tonelli's theorem, applied separately to the nonnegative hinge terms, and the
mean identity give
\begin{align*}
\int_{\B_n}\Phi(X)\,d\mu-\int_{\B_n}\Phi(Y)\,d\mu
&=
\Phi'_+(0)\Bigl(\int_{\B_n}X\,d\mu-\int_{\B_n}Y\,d\mu\Bigr)
\\
&\quad
+\int_{(0,1)}\Bigl(\int_{\B_n}(X-t)_+\,d\mu-\int_{\B_n}(Y-t)_+\,d\mu\Bigr)\,d\Phi''(t).
\end{align*}
The first term vanishes by the mean identity \eqref{eq:same_mean}, and the inner
difference in the second term is $\le0$ for every $t$ by \eqref{eq:hinge_dom},
while $d\Phi''\ge0$. Therefore the whole right-hand side is $\le0$.

For a general finite convex function with $\Phi(0)=0$, truncate a monotone
version of its derivative to $[-k,k]$ and normalize the resulting convex
function $\Phi_k$ by $\Phi_k(0)=0$. Then $\Phi_k\to\Phi$ pointwise on
$[0,1]$. Applying the preceding argument to $\Phi_k$, and then using the
assumed integrability together with the standard affine-minorant truncation
argument, proves \eqref{eq:convex_dom}.
\end{proof}

\paragraph{\bf Application to our setting.}
We apply the lemma with
\[
X(z)=U_f(z)=|f(z)|^p(1-|z|^2)^\alpha,
\qquad
Y(z)=w(z)=(1-|z|^2)^\alpha,
\]
which satisfy $\int X\,d\mu=\int Y\,d\mu$ by the normalization
$\|f\|_{A^p_\alpha}=1$. Hence proving \eqref{eq:hinge_goal}, i.e.
\[
\int_{\B_n}(U_f-t)_+\,d\mu \le \int_{\B_n}(w-t)_+\,d\mu \quad \forall\,t\in[0,1],
\]
implies
\[
\int_{\B_n}\Phi(U_f)\,d\mu \le \int_{\B_n}\Phi(w)\,d\mu
\]
for every admissible convex $\Phi:[0,1]\to\R$ with $\Phi(0)=0$.
The local interval comparison established in this paper does not by itself
verify this full family of hinge inequalities, because the two complementary
level ranges remain uncontrolled.

\begin{remark}[Subsequent development]\label{rem:subsequent_development}
The first version of the present work was completed before the later global
isoperimetric result of the first author. A subsequent preprint
\cite{KalajGromovRos2026} contains a global argument leading, in particular, to
a proof of Conjecture~\ref{conj:convex_functional}. We mention this only for
historical context. None of the statements or proofs in the present paper uses
that later global result; the contribution retained here is the original local
perturbative comparison, its quantitative stability remainder, and the
verification of the nearly spherical hypotheses.
\end{remark}

\appendix

\section{Verification of the local centered interval hypothesis}\label{app:nearly_spherical_verification}

In this appendix we justify the local centered interval hypothesis from
Setup~\ref{Setup} for perturbations \(f=1+\phi\) that are \(C^1\)-small on a
compact subball and satisfy the weighted tail condition below.
The argument has three parts:
\begin{enumerate}[(a)]
\item the level sets \(\{u=t\}\) are radial graphs for \(t\) in a compact interval of levels;
\item after re-centering by a small automorphism, these level sets remain in the small radial-graph class;
\item combining these facts with a bound on the Bergman barycenter yields Setup~\ref{Setup}.
\end{enumerate}

Throughout this appendix we write
\[
u(z):=|f(z)|^p(1-|z|^2)^\alpha,
\qquad f=1+\phi,
\qquad \alpha>n,
\qquad p>0.
\]

\subsection{Radial graph structure near the extremizer}\label{subsec:verify_nearly_spherical}

We first show that if \(f=1+\phi\) is \(C^1\)-close to \(1\), then on a compact interval of levels
the hypersurfaces \(\{u=t\}\) are Lipschitz radial graphs with small \(W^{1,\infty}\)-norm.

Let \(0<\rho<1\) and assume
\[
\|\phi\|_{C^1(\overline{\B_\rho})}\le \delta.
\]
Fix an interval of levels \(t\in[t_-,t_+]\) such that the corresponding radii for the extremizer satisfy
\[
0<\rho_- \le \rho_0(t)\le \rho_+<\rho,
\qquad
\rho_0(t):=\sqrt{1-t^{1/\alpha}}.
\]
We also assume
\begin{equation}\label{eq:tail_condition}
\sup_{|z|\ge\rho}|f(z)|^p(1-|z|^2)^\alpha<t_-.
\end{equation}
This natural weighted tail condition excludes additional superlevel components
outside the compact subball; compact $C^1$ control alone does not exclude them.

\begin{lemma}[Radial graph lemma]\label{lem:IFT_radial_graph}
There exists a constant
\[
\delta_{\mathrm{rg}}=\delta_{\mathrm{rg}}(n,p,\alpha,\rho,\rho_-,\rho_+)>0
\]
such that if
\[
\|\phi\|_{C^1(\overline{\B_\rho})}\le \delta_{\mathrm{rg}},
\]
and \eqref{eq:tail_condition} holds, then for every \(t\in[t_-,t_+]\) the
level set \(\{u=t\}\) is a Lipschitz radial graph:
\begin{equation}\label{eq:rho_graph}
\{u=t\}=\{\rho_t(\omega)\,\omega:\ \omega\in\Sn\},
\end{equation}
where \(\rho_t:\Sn\to(\rho_-/2,(\rho_++\rho)/2)\) satisfies
\begin{equation}\label{eq:rho_est}
\|\rho_t-\rho_0(t)\|_{W^{1,\infty}(\Sn)}
\le
C\,\|\phi\|_{C^1(\overline{\B_\rho})},
\end{equation}
with \(C=C(n,p,\alpha,\rho,\rho_-,\rho_+)\).
\end{lemma}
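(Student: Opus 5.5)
We work in polar coordinates $z=\rho\omega$ with $\omega\in\Sn$ and study the radial profile
\[
F(\rho,\omega):=\log u(\rho\omega)=p\log|1+\phi(\rho\omega)|+\alpha\log(1-\rho^2).
\]
The model profile $F_0(\rho)=\alpha\log(1-\rho^2)$ is strictly decreasing, with $\partial_\rho F_0=-2\alpha\rho/(1-\rho^2)\le-2\alpha\rho_-/4<0$ on the shell $\rho\in[\rho_-/2,(\rho_++\rho)/2]$. The shell lies in $\overline{\B_\rho}$ because $(\rho_++\rho)/2<\rho$.

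\textbf{Step 1 (local perturbation bounds).} For $\delta_{\mathrm{rg}}\le1/2$ we have $|1+\phi|\ge1/2$ on $\overline{\B_\rho}$, so $\log|1+\phi|=\operatorname{Re}\log(1+\phi)$ is smooth there. Both it and its Euclidean gradient are bounded by $C\|\phi\|_{C^1}$. Hence $\|F-F_0\|_{C^1(\text{shell})}\le C\delta$. In particular $\partial_\rho F\le-c<0$ on the shell once $\delta$ is small, with $c$ depending only on $\alpha,\rho_-$.

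\textbf{Step 2 (existence and uniqueness of the radial root).} Fix $\omega$ and $t\in[t_-,t_+]$. At the inner radius, $F_0(\rho_-/2)-\log t\ge\alpha\log(1-\rho_-^2/4)-\alpha\log(1-\rho_-^2)>0$, uniformly in $t$, because $\log t\le\log t_+=\alpha\log(1-\rho_-^2)$ when $\rho_0(t)\ge\rho_-$. Similarly $F_0((\rho_++\rho)/2)-\log t<0$, uniformly in $t$. These margins survive a $C\delta$ perturbation. Strict monotonicity in $\rho$ then gives exactly one root $\rho_t(\omega)$ of $F(\rho,\omega)=\log t$ inside the shell.

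To show there are no other points of $\{u=t\}$, we argue by regions:
\begin{itemize}
\item[(a)] For $|z|\ge\rho$, the tail condition \eqref{eq:tail_condition} gives $u<t_-\le t$.
\item[(b)] For $(\rho_++\rho)/2\le|z|<\rho$, monotonicity from the root gives $u<t$. This uses the fact that $\partial_\rho F<0$ persists on all of $[\rho_-/2,\rho]$, which we arrange by defining the shell up to $\rho$.
\item[(c)] For $|z|<\rho_-/2$ we need $u>t$. We get this from $u\ge(1-C\delta)^p(1-\rho_-^2/4)^\alpha>t_+$ for small $\delta$.
\end{itemize}
This gives \eqref{eq:rho_graph}.

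\textbf{Step 3 (regularity and the $W^{1,\infty}$ estimate).} By the implicit function theorem, $\rho_t$ is $C^1$ on $\Sn$. For the $L^\infty$ part, $|F(\rho_t,\omega)-F_0(\rho_t)|\le C\delta$ together with $|\partial_\rho F_0|\ge c$ gives $|\rho_t-\rho_0(t)|\le C\delta$ via the mean value theorem. For the tangential derivative, differentiating $F(\rho_t(\omega),\omega)=\log t$ along a tangent vector $X$ of $\Sn$ gives
\[
X\rho_t=-\frac{XF}{\partial_\rho F}.
\]
Since $F_0$ is radial, $XF=X(F-F_0)$, so $|XF|\le C\delta$ and $|X\rho_t|\le C\delta/c$. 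Because $\rho_0(t)$ is constant on $\Sn$, this is exactly \eqref{eq:rho_est}.

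The main obstacle is the global exclusion in Step 2, which requires showing that the level set has no components outside the shell. The tail condition handles $|z|\ge\rho$, and the constants must be chosen so that the inner and outer margins are uniform in $t\in[t_-,t_+]$. We will take $\delta_{\mathrm{rg}}$ to be the minimum of these finitely many explicit thresholds.
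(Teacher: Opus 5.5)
Your proof is correct and follows essentially the same route as the paper. You use the strictly decreasing log-profile on the shell $[\rho_-/2,(\rho_++\rho)/2]$, apply the intermediate value theorem with margins that are uniform in $t$, and exclude other roots via the tail condition and the model gap near the origin. The $W^{1,\infty}$ estimate comes from implicit differentiation, $X\rho_t=-XF/\partial_\rho F$. The only cosmetic difference is on $[(\rho_++\rho)/2,\rho)$: you exclude roots there by extending the monotonicity of $F$ up to $\rho$, whereas the paper invokes a uniform negative gap of the model profile; both arguments work.
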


\begin{proof}
Fix \(t\in[t_-,t_+]\) and \(\omega\in\Sn\). For \(\rho\in(0,1)\), set
\[
F(\rho,\omega):=\log u(\rho\omega)-\log t
=
p\log|1+\phi(\rho\omega)|+\alpha\log(1-\rho^2)-\log t.
\]
Choose \(\delta_{\mathrm{rg}}\) so small that
\[
\|\phi\|_{C^0(\overline{\B_\rho})}\le \frac12.
\]
Then \(|1+\phi|\ge \frac12\) on \(\overline{\B_\rho}\), so \(\log|1+\phi|\) is well-defined and smooth.

For \(\phi\equiv 0\), the equation \(F(\rho,\omega)=0\) is independent of \(\omega\) and has the unique solution
\(\rho=\rho_0(t)\in[\rho_-,\rho_+]\).

Differentiating in \(\rho\):
\[
\partial_\rho F(\rho,\omega)
=
p\,\partial_\rho\bigl(\log|1+\phi(\rho\omega)|\bigr)
-\frac{2\alpha\rho}{1-\rho^2}.
\]
Let
\[
I:=[\rho_-/2,(\rho_++\rho)/2].
\]
For $s\in I$, the second term is bounded away from $0$, since the function
$s\mapsto s/(1-s^2)$ is increasing on $(0,1)$ and hence
\[
\frac{2\alpha s}{1-s^2}
\ge
\frac{\alpha\rho_-}{1-\rho_-^2/4}
=:2c_0.
\]
On the other hand,
\[
\bigl|\partial_\rho\log|1+\phi(\rho\omega)|\bigr|
\le
\frac{|\nabla\phi(\rho\omega)\cdot\omega|}{|1+\phi(\rho\omega)|}
\le
2\|\nabla\phi\|_{L^\infty(\overline{\B_\rho})}
\le
2\|\phi\|_{C^1(\overline{\B_\rho})}.
\]
Hence, after shrinking \(\delta_{\mathrm{rg}}\) if necessary so that \(2p\,\delta_{\mathrm{rg}}\le c_0\), we obtain
\begin{equation}\label{eq:drF_neg_appendix}
\partial_\rho F(\rho,\omega)\le -c_0<0
\qquad\text{for all }(\rho,\omega)\in I\times\Sn.
\end{equation}
Thus, for each \(\omega\), the map \(\rho\mapsto F(\rho,\omega)\) is strictly decreasing on \(I\).

Since
\[
F(\rho_0(t),\omega)=p\log|1+\phi(\rho_0(t)\omega)|,
\]
we have
\[
|F(\rho_0(t),\omega)|
\le
C\,\|\phi\|_{C^0(\overline{\B_\rho})}.
\]
After decreasing $\delta_{\mathrm{rg}}$, the values of $F$ at the endpoints of
$I$ have opposite signs, uniformly in $t$ and $\omega$.  Thus
\eqref{eq:drF_neg_appendix} and the intermediate value theorem give a unique
\(\rho_t(\omega)\in I\) such that \(F(\rho_t(\omega),\omega)=0\), and moreover
\begin{equation}\label{eq:rho_Linf_est_appendix}
|\rho_t(\omega)-\rho_0(t)|
\le
C\,\|\phi\|_{C^0(\overline{\B_\rho})}.
\end{equation}
It remains to exclude roots outside $I$. For the model function, compactness
of $[t_-,t_+]$ gives a uniform positive gap on
$[0,\rho_-/2]$ and a uniform negative gap on
$[(\rho_++\rho)/2,\rho]$. The perturbation term
$p\log|1+\phi|$ is uniformly $O(\delta_{\mathrm{rg}})$; after one final
decrease of $\delta_{\mathrm{rg}}$, these signs persist. The tail condition
\eqref{eq:tail_condition} excludes roots for $|z|\ge\rho$. Hence the root
found in $I$ is the only root on each ray, and this proves
\eqref{eq:rho_graph} for the entire level set.

Differentiating \(F(\rho_t(\omega),\omega)=0\) tangentially on \(\Sn\) yields
\[
\nabla_\omega \rho_t(\omega)
=
-\frac{\nabla_\omega F(\rho_t(\omega),\omega)}{\partial_\rho F(\rho_t(\omega),\omega)}.
\]
Since \(\log(1-\rho^2)\) is independent of \(\omega\),
\[
\nabla_\omega F(\rho,\omega)
=
p\,\nabla_\omega\bigl(\log|1+\phi(\rho\omega)|\bigr).
\]
Using \(|1+\phi|\ge \frac12\),
\[
\bigl|\nabla_\omega\log|1+\phi(\rho\omega)|\bigr|
\le
C\,|\nabla_\omega(\phi(\rho\omega))|
\le
C\,\rho\,\|\nabla\phi\|_{L^\infty(\overline{\B_\rho})}
\le
C\,\|\phi\|_{C^1(\overline{\B_\rho})}.
\]
Together with \eqref{eq:drF_neg_appendix}, this gives
\begin{equation}\label{eq:rho_W1inf_est_appendix}
\|\nabla_\omega \rho_t\|_{L^\infty(\Sn)}
\le
C\,\|\phi\|_{C^1(\overline{\B_\rho})}.
\end{equation}
Combining \eqref{eq:rho_Linf_est_appendix} and \eqref{eq:rho_W1inf_est_appendix} proves \eqref{eq:rho_est}.
\end{proof}

\begin{lemma}[Passage to the volume-matched radius]
\label{lem:volume_matched_radius}
Under the assumptions of Lemma~\ref{lem:IFT_radial_graph}, let $R_t$ be the
Euclidean radius determined by
\[
 \mu(A_t)=\mu(\{|z|<R_t\}).
\]
Then, uniformly for $t\in[t_-,t_+]$,
\begin{equation}\label{eq:volume_radius_estimate}
 |R_t-\rho_0(t)|
 +\left\|\frac{\rho_t}{R_t}-1\right\|_{W^{1,\infty}(\Sn)}
 \le C\|\phi\|_{C^1(\overline{\B_\rho})}.
\end{equation}
Equivalently, if $r(t)=2\operatorname{arctanh}R_t$ is the volume-matched
radial parameter, then $\partial A_t$ is a small Bergman radial graph relative
to $r(t)$, with the same order of $W^{1,\infty}$ control.
\end{lemma}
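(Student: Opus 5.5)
By Lemma~\ref{lem:IFT_radial_graph} and the tail condition \eqref{eq:tail_condition}, $A_t$ is exactly the star-shaped region $\{\varrho\omega:\ 0\le\varrho<\rho_t(\omega)\}$. This gives an explicit volume formula. In polar coordinates the invariant measure is $dv_g=2n\,C_n\,\varrho^{2n-1}(1-\varrho^2)^{-n-1}\,d\varrho\,d\sigma$ up to the normalization used in the computation of $V$. Writing $h(R):=\mu(\{|z|<R\})=C_n\bigl(R^2/(1-R^2)\bigr)^n$, we get
\[
\mu(A_t)=\int_{\Sn} h(\rho_t(\omega))\,d\sigma(\omega),
\]
with $\sigma$ normalized. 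The volume-matched radius is then characterized by
\[
h(R_t)=\int_{\Sn} h(\rho_t(\omega))\,d\sigma(\omega).
\]

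The first estimate is for $|R_t-\rho_0(t)|$. Since $h$ is smooth and strictly increasing, with $h'\ge c>0$ on the compact interval $I=[\rho_-/2,(\rho_++\rho)/2]$, the mean value $h(R_t)$ lies between $\min h(\rho_t)$ and $\max h(\rho_t)$. Hence $R_t\in I$, and $\min\rho_t\le R_t\le\max\rho_t$. Combined with $\|\rho_t-\rho_0(t)\|_{L^\infty}\le C\|\phi\|_{C^1}$ from \eqref{eq:rho_est}, this gives $|R_t-\rho_0(t)|\le C\|\phi\|_{C^1}$ directly, with no need for the inverse function theorem. The same constants are uniform in $t\in[t_-,t_+]$, since everything is controlled on $I$.

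Next, the ratio. Write $\rho_t/R_t-1=(\rho_t-R_t)/R_t$, and use $R_t\ge\rho_-/2$. Its sup norm is then bounded by
\[
\frac{2}{\rho_-}\Bigl(\|\rho_t-\rho_0\|_\infty+|\rho_0-R_t|\Bigr)\le C\|\phi\|_{C^1}.
\]
Its tangential gradient is $\nabla_\omega\rho_t/R_t$, which is bounded by $(2/\rho_-)\,C\|\phi\|_{C^1}$ by \eqref{eq:rho_W1inf_est_appendix}. This proves \eqref{eq:volume_radius_estimate}.

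For the equivalent Bergman formulation, set $r(t)=2\operatorname{arctanh}R_t$ and define $u$ by $\rho_t(\omega)=\tanh\bigl(\tfrac{r(t)}{2}(1+u(\omega))\bigr)$, i.e.
\[
u=\frac{\operatorname{arctanh}\rho_t}{\operatorname{arctanh}R_t}-1.
\]
Since $\operatorname{arctanh}$ is smooth with derivative bounded above and below on $I$, and $\operatorname{arctanh}R_t\ge c>0$, both $\|u\|_\infty$ and $\|\nabla u\|_\infty$ are bounded by $C\bigl(\|\rho_t-R_t\|_\infty+\|\nabla\rho_t\|_\infty\bigr)\le C\|\phi\|_{C^1}$.

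The step needing most care is the first one: the identification of $A_t$ with the star-shaped region. One must check that the superlevel set on each ray is exactly $[0,\rho_t(\omega))$. This uses the sign information from the proof of Lemma~\ref{lem:IFT_radial_graph}: $F>0$ below the root, $F<0$ above it inside $\B_\rho$, and the tail condition beyond $\B_\rho$. One must also track that the constant $C_n$ matches the measure normalization. Everything else is a mean-value argument on a compact interval.
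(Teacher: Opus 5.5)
Your proof is correct and follows the paper's route. You write $\mu(A_t)$ in polar coordinates over the radial subgraph, bound $R_t-\rho_0(t)$, and then obtain the ratio estimate and the Bergman formulation through the smooth change $R=\tanh(r/2)$ on a compact radius interval. The one variation is in bounding $|R_t-\rho_0(t)|$: you trap $R_t$ between $\min\rho_t$ and $\max\rho_t$ using only the monotonicity of $h$ and the averaging identity, so no lower bound on $h'$ is needed. The paper instead applies the mean value theorem twice with two-sided bounds on $W'$, which gives the integral control $|R_t-\rho_0(t)|\le C\int_{\Sn}|\rho_t-\rho_0(t)|\,d\sigma$.
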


\begin{proof}
Put
\[
 W(R):=\mu(\{|z|<R\})
 =|\Sn|\int_0^R\frac{s^{2n-1}}{(1-s^2)^{n+1}}\,ds.
\]
The radial-subgraph description and the mean value theorem give
\begin{align*}
 |\mu(A_t)-W(\rho_0(t))|
 &\le C\int_{\Sn}|\rho_t(\omega)-\rho_0(t)|\,d\sigma(\omega)\\
 &\le C\|\phi\|_{C^1(\overline{\B_\rho})}.
\end{align*}
All radii remain in a fixed compact subinterval of $(0,1)$, on which
$W'$ is bounded above and bounded away from zero. Since
$W(R_t)=\mu(A_t)$, another application of the mean value theorem yields
$|R_t-\rho_0(t)|\le C\|\phi\|_{C^1}$. Combining this estimate with
\eqref{eq:rho_est}, and using that $R_t$ is uniformly bounded away from zero,
proves \eqref{eq:volume_radius_estimate}. The final assertion follows from
the smooth change of variables $R=\tanh(r/2)$ on compact radius intervals.
\end{proof}

\begin{remark}[From Euclidean to Bergman radial graphs]\label{rem:euclidean_to_bergman_graph}
Lemma~\ref{lem:IFT_radial_graph} gives a Euclidean radial graph \( |z|=\rho_t(\omega)\).
Since Bergman spheres are Euclidean spheres by radial symmetry, this is equivalent to a Bergman radial graph
after changing variables from Euclidean radius \(\rho\) to the radial parameter \(r\), via
\[
\rho=\tanh(r/2).
\]
Because this change of variables is smooth on compact radius intervals, estimate \eqref{eq:rho_est} implies the corresponding
smallness estimate for the Bergman graph function in \(W^{1,\infty}\).
\end{remark}

\subsection{Re-centering preserves the small graph class}

We next show that re-centering by a small automorphism preserves the small radial-graph regime needed to apply
Theorem~\ref{thm:fuglede_bergman}.

\begin{lemma}[Re-centering preserves small radial graphs]\label{lem:recenter_preserves_graph}
Fix \(0<r_-<r_+\le r_0\). There exist constants
\[
\eta_0=\eta_0(n,r_0,r_-,r_+)>0,
\qquad
C=C(n,r_0,r_-,r_+)\ge 1,
\]
such that the following holds.

Let \(E\subset\B_n\) have boundary given by a Bergman radial graph as in \eqref{eq:graph_param_thm},
with graph function \(u_E\), and suppose
\[
\|u_E\|_{W^{1,\infty}(\Sph)}\le \eta_0,
\qquad
\mu(E)=\mu(\B_r)\ \text{for some }r\in[r_-,r_+],
\qquad
a:=\mathrm{Bar}(E)\ \text{satisfies }|a|\le \eta_0.
\]
Let \(\psi_a\in\Aut(\B_n)\) satisfy \(\psi_a(a)=0\), and set
\[
\widetilde E:=\psi_a(E).
\]
Then \(\partial\widetilde E\) is again a Bergman radial graph as in \eqref{eq:graph_param_thm}, with graph function
\(\widetilde u_E\) satisfying
\begin{equation}\label{eq:recenter_W1inf_bound_fixed_appendix}
\|\widetilde u_E\|_{W^{1,\infty}(\Sph)}
\le
C\bigl(\|u_E\|_{W^{1,\infty}(\Sph)}+|a|\bigr),
\end{equation}
and
\begin{equation}\label{eq:recenter_W12_bound_fixed_appendix}
\|\widetilde u_E\|_{W^{1,2}(\Sph)}
\le
C\bigl(\|u_E\|_{W^{1,2}(\Sph)}+|a|\bigr).
\end{equation}
In particular, if \(\|u_E\|_{W^{1,\infty}}+|a|\) is sufficiently small, then
\[
\|\widetilde u_E\|_{W^{1,\infty}(\Sph)}\le \varepsilon_0(r_0),
\]
so Theorem~\ref{thm:fuglede_bergman} applies to \(\widetilde E\).
\end{lemma}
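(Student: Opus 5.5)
We plan to write the new boundary as a radial graph by solving for its radius along each ray with the implicit function theorem, treating $(a,\text{graph function})$ as a small perturbation of the centered ball.

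\textbf{Setup.} Write $\partial E=\{R_E(\omega)\omega\}$ with Euclidean radius
\[
R_E(\omega)=\tanh\!\bigl(\tfrac r2(1+u_E(\omega))\bigr).
\]
Because $r\in[r_-,r_+]$ and $\|u_E\|_{W^{1,\infty}}\le\eta_0$, all radii lie in a fixed compact subinterval $[\tau_-,\tau_+]\subset(0,1)$.

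Let $\psi_a$ be the standard involutive automorphism $\varphi_a$, composed if necessary with a unitary. A unitary leaves graph norms unchanged, so it is harmless.

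For $\theta\in\Sph$ and $\tau\in(0,1)$, define
\[
H(\tau,\theta;a):=|\varphi_a(\tau\theta)|^2 .
\]
Then $\partial\widetilde E=\varphi_a(\partial E)$. A point $\tau\theta$ lies on $\partial\widetilde E$ iff $\varphi_a(\tau\theta)\in\partial E$, that is, iff
\[
\Psi(\tau,\theta;a,u_E):=|\varphi_a(\tau\theta)|-R_E\!\left(\frac{\varphi_a(\tau\theta)}{|\varphi_a(\tau\theta)|}\right)=0 .
\]

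\textbf{Existence of the recentered graph.} At $a=0$ and $u_E=0$ this equation reads $\tau=\tanh(r/2)$. There $\partial_\tau\Psi=1$ uniformly in $\theta$.

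The map $(\tau,\theta,a)\mapsto\varphi_a(\tau\theta)$ is smooth and uniformly $C^2$ on the compact set
\[
[\tau_-/2,(1+\tau_+)/2]\times\Sph\times\{|a|\le\eta_0\}.
\]
Its derivatives differ from those at $a=0$ by $O(|a|)$. The composite term $R_E(\cdot/|\cdot|)$ has $\tau$-derivative bounded by
\[
C\,\|\nabla u_E\|_\infty\,\bigl|\partial_\tau(\varphi_a(\tau\theta)/|\varphi_a|)\bigr|=O(\eta_0+|a|),
\]
because at $a=0$ the angular part $\varphi_a(\tau\theta)/|\varphi_a(\tau\theta)|$ does not depend on $\tau$.

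Hence $\partial_\tau\Psi\ge 1/2$ after shrinking $\eta_0$, uniformly in $\theta$. A monotonicity and intermediate-value argument, as in Lemma~\ref{lem:IFT_radial_graph}, then gives a unique root $\widetilde R(\theta)$ on each ray with
\[
|\widetilde R-\tanh(r/2)|\le C\bigl(\|u_E\|_\infty+|a|\bigr).
\]
Roots outside the compact shell are excluded because $\varphi_a$ is $O(|a|)$-close to the identity there. This shows $\partial\widetilde E$ is a radial graph.

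Since $\mu(\widetilde E)=\mu(E)=\mu(\B_r)$, the parameter $r$ is unchanged. We therefore define $\widetilde u_E$ through
\[
\widetilde R(\theta)=\tanh\!\bigl(\tfrac r2(1+\widetilde u_E)\bigr),
\]
which is a smooth, uniformly bi-Lipschitz change of variables on compact radius ranges.

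\textbf{Derivative bounds.} Differentiate $\Psi(\widetilde R(\theta),\theta)=0$ tangentially to get
\[
\nabla_\theta\widetilde R=-\frac{\nabla_\theta\Psi}{\partial_\tau\Psi}.
\]
The numerator splits into two pieces.

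The first is $\nabla_\theta|\varphi_a(\tau\theta)|$. It vanishes at $a=0$ and is smooth in $a$, so it is $O(|a|)$ pointwise.

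The second is the derivative of $R_E$ composed with the near-identity angular map
\[
\theta\mapsto\varphi_a(\tau\theta)/|\varphi_a(\tau\theta)|.
\]
It is bounded by $C|\nabla u_E|$ evaluated at that point. Taking $L^\infty$ norms gives \eqref{eq:recenter_W1inf_bound_fixed_appendix}.

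\textbf{The $W^{1,2}$ estimate.} Here I expect the main obstacle: the bounds must be linear in $\|u_E\|_{W^{1,2}}+|a|$, not merely in the sup norms. I would handle it as follows.

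The pointwise bounds above take the form
\[
|\widetilde u_E(\theta)|+|\nabla\widetilde u_E(\theta)|\le C\Bigl(|a|+|u_E(T_a\theta)|+|\nabla u_E(T_a\theta)|\Bigr),
\]
where $T_a:\Sph\to\Sph$ is the angular map $\theta\mapsto\varphi_a(\widetilde R(\theta)\theta)/|\cdot|$.

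To check the zeroth-order part, use the mean value theorem in $u_E$ for the root. It gives
\[
|\widetilde R(\theta)-\tanh(r/2)|\le C\bigl(|a|+|u_E(T_a\theta)|\bigr).
\]
This holds because at $u_E=0$ the root for a ball is $\tanh(r/2)+O(|a|)$: the image of a centered ball under $\varphi_a$ is a ball perturbed at order $|a|$.

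Next, $T_a$ is a bi-Lipschitz diffeomorphism of $\Sph$, $O(\eta_0+|a|)$-close to the identity in $C^1$. Its Jacobian is therefore bounded above and below uniformly.

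Finally, square, integrate over $\Sph$, and change variables $\theta'=T_a\theta$. This turns $\int|u_E\circ T_a|^2+|\nabla u_E\circ T_a|^2$ into a multiple of $\|u_E\|_{W^{1,2}}^2$, which yields \eqref{eq:recenter_W12_bound_fixed_appendix}.

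The concluding statement follows by requiring $C(\|u_E\|_{W^{1,\infty}}+|a|)\le\varepsilon_0(r_0)$.
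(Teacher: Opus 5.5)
Your proposal is correct and is essentially the paper's argument viewed from the target side: your angular map $T_a$ is, up to the antipodal map, the inverse of the paper's $\Theta_a(\omega)=\psi_a(Z(\omega))/|\psi_a(Z(\omega))|$. The paper obtains the radial-graph property by inverting $\Theta_a$ with a quantitative inverse function theorem, while you obtain it by monotone root-finding along each ray, and both then finish with the same chain-rule pointwise bounds and bi-Lipschitz change of variables for the $W^{1,2}$ estimate. The only slip is cosmetic: the standard involution satisfies $\varphi_0=-\operatorname{Id}$ (the paper's canonical $\psi_a$ is exactly $-\varphi_a$), so $\varphi_a$, $T_a$ and your angular maps are $O(|a|)$-close to $-\operatorname{Id}$ rather than to the identity, which changes nothing because composing with $-\operatorname{Id}$ preserves every norm and Jacobian bound you use.
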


\begin{proof}
We give the details in Euclidean radial coordinates; the passage to Bergman
radius is made at the end. Since $r\in[r_-,r_+]$ and $u_E$ is small, there
are fixed numbers $0<\rho_1<\rho_2<1$ such that
$\partial E\subset\{\rho_1<|z|<\rho_2\}$. Write
\[
 Z(\omega)=\rho_E(\omega)\omega,
 \qquad \omega\in\Sph,
\]
where the smooth change between Euclidean radius and the radial parameter gives
\begin{equation}\label{eq:euclidean_graph_control}
 \|\rho_E-\rho_r\|_{W^{1,j}(\Sph)}
 \le C\|u_E\|_{W^{1,j}(\Sph)},\qquad j\in\{2,\infty\}.
\end{equation}

Choose the canonical automorphism $\psi_a$ given explicitly in
\eqref{eq:canonical_automorphism} below. The identity
$(1-\sqrt{1-|a|^2})/|a|^2=(1+\sqrt{1-|a|^2})^{-1}$ removes the apparent
singularity of the projection formula at $a=0$. Its first two derivatives imply,
uniformly on $\{|z|\le(1+\rho_2)/2\}$,
\begin{equation}\label{eq:automorphism_C2_control}
 \|\psi_a-\operatorname{Id}\|_{C^2}\le C|a|.
\end{equation}
Set $Y_a(\omega):=\psi_a(Z(\omega))$ and
$\Theta_a(\omega):=Y_a(\omega)/|Y_a(\omega)|$. From
\eqref{eq:euclidean_graph_control}--\eqref{eq:automorphism_C2_control},
\[
 \|\Theta_a-\operatorname{Id}\|_{C^1(\Sph)}
 \le C\bigl(\|u_E\|_{W^{1,\infty}}+|a|\bigr).
\]
If $\eta_0$ is sufficiently small, the quantitative inverse function theorem
on the compact sphere shows that $\Theta_a$ is a $C^1$ diffeomorphism and
that
\[
 \|\Theta_a^{-1}-\operatorname{Id}\|_{C^1}
 \le C\bigl(\|u_E\|_{W^{1,\infty}}+|a|\bigr).
\]
Consequently every ray meets $\psi_a(\partial E)$ exactly once, and its
Euclidean radial function is
\[
 \widetilde\rho(\theta)
 :=|Y_a(\Theta_a^{-1}(\theta))|.
\]
The chain rule, the preceding bounds, and
\eqref{eq:automorphism_C2_control} give
\[
 \|\widetilde\rho-\rho_r\|_{W^{1,\infty}}
 \le C(\|u_E\|_{W^{1,\infty}}+|a|),
 \qquad
 \|\widetilde\rho-\rho_r\|_{W^{1,2}}
 \le C(\|u_E\|_{W^{1,2}}+|a|).
\]
The maps between Euclidean radii and radial parameters have uniformly bounded first
two derivatives on the fixed annulus. Applying the chain rule once more gives
\eqref{eq:recenter_W1inf_bound_fixed_appendix} and
\eqref{eq:recenter_W12_bound_fixed_appendix}. Finally, invariance of $dv_g$
under automorphisms gives $\mu(\widetilde E)=\mu(E)$, so the comparison radius
is still $r$.
\end{proof}

\subsection{Conclusion: verification of the local setup}

The remaining ingredient is quantitative control of the Bergman barycenter.
The following lemma supplies the estimate needed to combine the two preceding
radial-graph arguments.

\begin{lemma}[Barycenter bound on the perturbative interval]\label{lem:barycenter_bound_appendix}
Assume that \(f=1+\phi\) with \(\|\phi\|_{C^1(\overline{\B_\rho})}\) sufficiently small, where
\(0<\rho<1\), and let
\[
u(z)=|f(z)|^p(1-|z|^2)^\alpha,
\qquad
A_t:=\{u>t\}.
\]
Assume moreover that \(t\in[t_-,t_+]\) lies in the compact level interval from
Lemma~\ref{lem:IFT_radial_graph}, so that
\[
0<\rho_-\le \rho_0(t)\le \rho_+<\rho,
\qquad
\rho_0(t):=\sqrt{1-t^{1/\alpha}},
\]
and \(\partial A_t\) is given by the radial graph
\[
\partial A_t=\{\rho_t(\omega)\omega:\omega\in\Sn\},
\qquad
\|\rho_t-\rho_0(t)\|_{W^{1,\infty}(\Sn)}
\le C\|\phi\|_{C^1(\overline{\B_\rho})}.
\]
Let \(\mathrm{Bar}(A_t)\) denote the holomorphic barycenter of \(A_t\) in the sense of
Ja\'cimovi\'c--Kalaj. Then one has
\[
|\mathrm{Bar}(A_t)|
\le
C\,\|\phi\|_{C^1(\overline{\B_\rho})}
\qquad\text{for all }t\in[t_-,t_+],
\]
where \(C=C(n,p,\alpha,\rho_-,\rho_+,\rho)\).
\end{lemma}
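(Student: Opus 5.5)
The barycenter is a perturbative object: for the centered ball $A_t^0=\{|z|<\rho_0(t)\}$ it is $0$, and it is characterized as the unique minimizer of a strictly convex functional. The plan is therefore to locate the zero of its gradient by an implicit-function/stability argument. Recall from \cite{JacimovicKalajAnnFennMath} that $\mathrm{Bar}(E)$ is the unique minimizer of a functional $L_E(a)$, obtained by averaging over $E$ (with respect to $d\mu$) a function $\Lambda(a,z)$ that is smooth in $(a,z)\in\B_n\times\B_n$. Example~\ref{ex:symmetric_superlevels} expresses its critical point equation at $a=0$ as $\int_E z\,d\mu=0$. More generally, $a=\mathrm{Bar}(E)$ iff $\Gamma_E(a):=\int_E \nabla_a\Lambda(a,z)\,d\mu(z)=0$.

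The first step is to use this critical point equation for all $t\in[t_-,t_+]$ and all $a$ in a fixed small ball $|a|\le a_0$. We then compare $\Gamma_{A_t}$ with $\Gamma_{A_t^0}$. Since $\partial A_t$ and $\partial A_t^0$ are radial graphs lying in a fixed compact annulus inside $\B_\rho$, we write both integrals in polar coordinates. The integrand $\nabla_a\Lambda(a,z)(1-|z|^2)^{-n-1}$ is bounded there, uniformly in $|a|\le a_0$ and in $t$. The mean value estimate used in Lemma~\ref{lem:volume_matched_radius} then gives
\[
\sup_{|a|\le a_0}\bigl|\Gamma_{A_t}(a)-\Gamma_{A_t^0}(a)\bigr|\le C\|\rho_t-\rho_0(t)\|_{L^\infty}\le C\|\phi\|_{C^1(\overline{\B_\rho})}.
\]
Only the $C^0$ part of Lemma~\ref{lem:IFT_radial_graph} is needed here. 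The same estimate holds for the $a$-derivatives $D_a\Gamma$, because $\Lambda$ is $C^2$ on the relevant compact set.

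The second step is to establish nondegeneracy at the model. By radial symmetry, $\Gamma_{A_t^0}(0)=0$. The Hessian $H_t:=D_a\Gamma_{A_t^0}(0)$ is rotation-equivariant, so it is a scalar multiple $\lambda(t)\,\mathrm{Id}$ on $\R^{2n}$. Strict convexity of $L_E$ (the uniqueness result of \cite[Theorem~5.1]{JacimovicKalajAnnFennMath}) gives $\lambda(t)>0$. Continuity of $\lambda$ in $\rho_0(t)\in[\rho_-,\rho_+]$ then gives $\lambda(t)\ge\lambda_0>0$ uniformly. This is the step I expect to be the main obstacle. Strict convexity alone does not give positivity of the Hessian, so I would first try to compute $\lambda(t)$ explicitly from the formula for $\Lambda$ by integrating $\mathrm{tr}\,D_a^2\Lambda(0,z)$ over the centered ball. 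If that fails, I would instead use a uniform strong convexity of $L_E$ on compact sets, as proved in the cited paper.

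The third step combines the two. By the first step, $D_a\Gamma_{A_t}(a)$ is within $C\|\phi\|_{C^1}+C|a|$ of $\lambda(t)\mathrm{Id}$ for $|a|\le a_0$. Shrinking $a_0$ and $\|\phi\|_{C^1}$, we get $D_a\Gamma_{A_t}\ge(\lambda_0/2)\,\mathrm{Id}$ on $|a|\le a_0$. Since $|\Gamma_{A_t}(0)|\le C\|\phi\|_{C^1}$, a quantitative inverse function theorem (or simply monotonicity of $\Gamma_{A_t}$ along the segment from $0$) produces a zero $a_t$ with $|a_t|\le (2C/\lambda_0)\|\phi\|_{C^1}<a_0$. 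By uniqueness of the barycenter, $a_t=\mathrm{Bar}(A_t)$. All constants depend only on $n,p,\alpha,\rho_-,\rho_+,\rho$ through the annulus and Lemma~\ref{lem:IFT_radial_graph}, which yields the claimed bound uniformly in $t$.
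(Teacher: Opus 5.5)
Your proposal is correct, and its analytic core is the paper's. The paper shows that $\nabla L_{A_t}(0)=-2\int_{A_t}z\,d\mu$ is $O(\|\phi\|_{C^1})$ by subtracting $F(\rho_0(t))$ and using $\int_{\Sn}\omega\,d\sigma=0$, which is your comparison with $\Gamma_{A_t^0}(0)=0$. It then shows the Hessian is uniformly positive on a fixed ball $|a|\le a_0$ and integrates along the segment from $0$. The step you flag as the main obstacle is settled by the explicit computation the paper performs, $D^2\Psi_z(0)[\xi,\xi]=2|\xi|^2-2\operatorname{Re}(\langle\xi,z\rangle^2)$, which gives $\lambda(t)=2\mu(A_t^0)$. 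Your trace idea yields exactly this, because the $\operatorname{Re}(\langle\xi,z\rangle^2)$ terms cancel in the trace; Cauchy--Schwarz even shows the integrand is pointwise $\ge 2|\xi|^2(1-|z|^2)$. The real difference is how you know the barycenter lies in the ball where the Hessian bound holds. The paper uses the coercivity bound $L_t(a)\ge-\mu(A_t)\log(1-|a|^2)-C$ together with a compactness/contradiction argument against uniqueness of the minimizer for the centered-ball functional. That route needs only that the barycenter is the unique \emph{minimizer}, but its smallness threshold is non-explicit. You instead produce a critical point inside $|a|\le a_0$ directly: minimize over the closed small ball and note that $\langle\Gamma_{A_t}(a),a\rangle\ge -|\Gamma_{A_t}(0)|\,a_0+\lambda_0a_0^2/2>0$ on its boundary. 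This is fully quantitative, but you then need that \emph{every} critical point of $L_{A_t}$ is the barycenter. That is true, since $\Psi_z(a)=2\log\cosh d_b(a,z)$ is strictly geodesically convex on the Bergman ball, so $L_E$ has at most one critical point; the paper implicitly uses the same fact in Example~\ref{ex:symmetric_superlevels}. You should cite it as such rather than infer it from uniqueness of the minimizer alone.
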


\begin{proof}
For a measurable set \(E\subset\B_n\) with \(0<\mu(E)<\infty\), let
\[
L_E(a):=
-\int_E
\log\frac{(1-|a|^2)(1-|z|^2)}{|1-\langle a,z\rangle|^2}\,d\mu(z),
\qquad a\in\B_n.
\]
By Theorem~5.1 and Definition~5.3 of Ja\'cimovi\'c--Kalaj \cite{JacimovicKalajAnnFennMath},
the holomorphic barycenter \(\mathrm{Bar}(E)\) is the unique minimizer of \(L_E\).
Applying this with \(E=A_t\), we write
\[
L_t(a):=L_{A_t}(a),
\qquad
a(t):=\mathrm{Bar}(A_t).
\]
Then \(a(t)\) is the unique critical point of \(L_t\), hence
\[
\nabla L_t(a(t))=0.
\]

We first record the gradient at the origin. For every real direction
$\xi\in\C^n\simeq\R^{2n}$, differentiation under the integral sign gives
\[
 DL_t(0)[\xi]=-2\operatorname{Re}
 \left\langle\xi,\int_{A_t}z\,d\mu(z)\right\rangle.
\]
Thus, under the usual real identification of $\C^n$ with $\R^{2n}$,
\begin{equation}\label{eq:gradLt0}
\nabla L_t(0)=-2\int_{A_t} z\,d\mu(z).
\end{equation}

We next estimate the first moment of \(A_t\).
Since \(A_t\) is the radial subgraph
\[
A_t=\{r\omega:\ 0\le r<\rho_t(\omega),\ \omega\in\Sn\},
\]
polar coordinates give
\[
\int_{A_t} z\,d\mu(z)
=
|\Sn|\int_{\Sn}\omega
\left(\int_0^{\rho_t(\omega)} \frac{r^{2n}}{(1-r^2)^{n+1}}\,dr\right)d\sigma(\omega),
\]
where $d\sigma$ is the normalized surface measure on \(\Sn\).
Set
\[
F(r):=\int_0^r \frac{s^{2n}}{(1-s^2)^{n+1}}\,ds.
\]
Then
\[
\int_{A_t} z\,d\mu(z)
=
|\Sn|\int_{\Sn}\omega\,F(\rho_t(\omega))\,d\sigma(\omega).
\]
Since \(\int_{\Sn}\omega\,d\sigma(\omega)=0\), we may subtract the constant \(F(\rho_0(t))\):
\[
\int_{A_t} z\,d\mu(z)
=
|\Sn|\int_{\Sn}\omega\bigl(F(\rho_t(\omega))-F(\rho_0(t))\bigr)\,d\sigma(\omega).
\]
Now \(\rho_t(\omega)\) and \(\rho_0(t)\) stay in the fixed compact interval
\(
[\rho_-/2,(\rho_++\rho)/2]
\),
so \(F'\) is bounded there. Hence, by the mean value theorem,
\[
|F(\rho_t(\omega))-F(\rho_0(t))|
\le
C\,|\rho_t(\omega)-\rho_0(t)|
\le
C\,\|\phi\|_{C^1(\overline{\B_\rho})}.
\]
Therefore
\begin{equation}\label{eq:first_moment_bound}
\left|\int_{A_t} z\,d\mu(z)\right|
\le
C\,\|\phi\|_{C^1(\overline{\B_\rho})}.
\end{equation}
Combining \eqref{eq:gradLt0} and \eqref{eq:first_moment_bound} yields
\begin{equation}\label{eq:gradLt0_bound}
|\nabla L_t(0)|
\le
C\,\|\phi\|_{C^1(\overline{\B_\rho})}.
\end{equation}

We now use the nondegeneracy of the barycenter equation near a centered ball.
For fixed \(z\in\B_n\), set
\[
\Psi_z(a):=
-\log\frac{(1-|a|^2)(1-|z|^2)}{|1-\langle a,z\rangle|^2}.
\]
For a real direction $\xi\in\C^n\simeq\R^{2n}$, direct differentiation at
$a=0$ gives the corrected formula
\begin{equation}\label{eq:correct_hessian_integrand}
D^2\Psi_z(0)[\xi,\xi]
=2|\xi|^2-2\operatorname{Re}\bigl(\langle\xi,z\rangle^2\bigr).
\end{equation}
The second term is not sign-definite pointwise.  For a centered ball $B$,
however, its integral vanishes by invariance under $z\mapsto iz$, and hence
\begin{equation}\label{eq:ball_hessian}
D^2L_B(0)[\xi,\xi]=2\mu(B)|\xi|^2.
\end{equation}
The model radii remain in a compact interval. The radial-graph estimate and
continuity of the first two derivatives therefore give constants $a_0,c_0>0$,
uniform in $t$, such that
\begin{equation}\label{eq:local_hessian_bound}
D^2L_t(a)[\xi,\xi]\ge c_0|\xi|^2\qquad (|a|\le a_0).
\end{equation}
We justify that the minimizer lies in this neighborhood. All the sets $A_t$
are contained in the fixed compact ball $\overline{\B_\rho}$ and converge in
symmetric difference, uniformly in $t$, to the corresponding centered balls
as $\|\phi\|_{C^1}\to0$. Hence $L_t$ and its first two derivatives converge
uniformly on compact subsets of the $a$-ball to their centered-ball
counterparts. Moreover, for $z\in\overline{\B_\rho}$,
\[
 |1-\langle a,z\rangle|\ge1-\rho,
\]
so the definition of $L_t$ gives, uniformly in $t$,
\[
 L_t(a)\ge-\mu(A_t)\log(1-|a|^2)-C.
\]
The volumes $\mu(A_t)$ are bounded below on the compact level interval.
Therefore $L_t(a)\to+\infty$ uniformly as $|a|\to1$, and the minimizers stay
in a fixed compact subset of $\B_n$. If a sequence of perturbations tended to
zero while $|a(t)|\ge a_0$, compactness and uniform convergence would produce
a nonzero minimizer of a centered-ball functional, contradicting the
uniqueness in \cite[Theorem~5.1]{JacimovicKalajAnnFennMath}. After decreasing
the perturbation threshold, we thus have $|a(t)|<a_0$. Integrating
\eqref{eq:local_hessian_bound} along the segment from $0$ to $a(t)$ and using
$\nabla L_t(a(t))=0$ gives
\[
c_0|a(t)|\le |\nabla L_t(0)|.
\]
Using \eqref{eq:gradLt0_bound}, we arrive at
\[
|a(t)|
\le
C\,\|\phi\|_{C^1(\overline{\B_\rho})}.
\]
Since \(a(t)=\mathrm{Bar}(A_t)\), the proof is complete.
\end{proof}

\begin{lemma}[Continuous choice of the recentering]\label{lem:measurable_recentering}
Under the hypotheses of Lemmas~\ref{lem:IFT_radial_graph} and
\ref{lem:barycenter_bound_appendix}, the maps
\[
 t\longmapsto \rho_t\in C^1(\Sn),
 \qquad t\longmapsto a(t):=\operatorname{Bar}(A_t)\in\B_n
\]
are continuous on $[t_-,t_+]$. One may choose the recentering automorphism
$\psi_t=\psi_{a(t)}$ continuously in $t$. The resulting graph function
$t\mapsto\widetilde u_t$ is continuous in $W^{1,2}(\Sph)$; in particular,
$t\mapsto\|\widetilde u_t\|_{W^{1,2}}^2$ is measurable.
\end{lemma}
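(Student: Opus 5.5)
We prove continuity of the three maps in turn, each by an implicit-function argument with the parameter $t$ added.

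\emph{Continuity of $t\mapsto\rho_t$ in $C^1(\Sn)$.}
Consider the function $F(\rho,\omega,t)=p\log|1+\phi(\rho\omega)|+\alpha\log(1-\rho^2)-\log t$ from the proof of Lemma~\ref{lem:IFT_radial_graph}. It is jointly $C^1$ on $I\times\Sn\times[t_-,t_+]$, where $I$ is the compact radius interval of that lemma, and \eqref{eq:drF_neg_appendix} gives $\partial_\rho F\le -c_0$ uniformly there. Since the root $\rho_t(\omega)$ in $I$ is unique, the implicit function theorem with $t$ as a parameter makes $(\omega,t)\mapsto\rho_t(\omega)$ jointly $C^1$, with $\partial_t\rho_t=-\partial_tF/\partial_\rho F=-t^{-1}/\partial_\rho F$. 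The tangential gradient is
\[
\nabla_\omega\rho_t=-\nabla_\omega F(\rho_t(\omega),\omega)/\partial_\rho F(\rho_t(\omega),\omega).
\]
This is a continuous function of $(\omega,t)$ on the compact set $\Sn\times[t_-,t_+]$, hence uniformly continuous. Therefore $t\mapsto\rho_t$ is continuous into $C^1(\Sn)$. The same holds for the volume-matched radius $R_t$ of Lemma~\ref{lem:volume_matched_radius}, since $t\mapsto\mu(A_t)=|\Sn|\int_{\Sn}W_0(\rho_t(\omega))\,d\sigma$ is continuous and $W$ is a diffeomorphism on the relevant compact radius range.

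\emph{Continuity of $t\mapsto a(t)$.}
The functional $L_t(a)$ is an integral over the radial subgraph of $\rho_t$ against a kernel that is smooth in $(a,z)$ for $|a|\le a_0$ and $z\in\overline{\B_\rho}$. Writing it in polar coordinates shows that $L_t$ and its first two $a$-derivatives depend continuously on $t$, uniformly in $|a|\le a_0$. By Lemma~\ref{lem:barycenter_bound_appendix}, $a(t)$ is the unique zero of $\nabla L_t$ in $\{|a|<a_0\}$, and the Hessian satisfies the uniform bound \eqref{eq:local_hessian_bound}. The implicit function theorem applied to the map $(a,t)\mapsto\nabla L_t(a)$ then shows that $a(t)$ is continuous, in fact $C^1$. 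Alternatively, one can use the standard argument that unique minimizers of a uniformly convergent family of strictly convex functions depend continuously on the parameter.

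\emph{Continuity of the recentering and of $\widetilde u_t$.}
The canonical automorphism $\psi_a$ in \eqref{eq:canonical_automorphism} depends smoothly on $(a,z)$ for $|a|<1$; the removable-singularity identity quoted in the proof of Lemma~\ref{lem:recenter_preserves_graph} handles $a=0$. Hence $\psi_t:=\psi_{a(t)}$ is continuous in $t$ in $C^2$ on the fixed annulus. We then follow the construction in Lemma~\ref{lem:recenter_preserves_graph}:
\[
Y_t(\omega)=\psi_t(\rho_t(\omega)\omega),\qquad \Theta_t=Y_t/|Y_t|,\qquad \widetilde\rho_t=|Y_t\circ\Theta_t^{-1}|.
\]
Both $Y_t$ and $\Theta_t$ are continuous in $t$ in $C^1$. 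Moreover, $\Theta_t$ is a $C^1$-diffeomorphism uniformly close to the identity, so $\Theta_t^{-1}$ is continuous in $t$ in $C^1$; this follows from the inverse function theorem with parameters, or from the identity $\Theta_t^{-1}-\Theta_s^{-1}=\Theta_s^{-1}\circ\Theta_s\circ\Theta_t^{-1}-\Theta_s^{-1}$ together with uniform $C^1$ bounds.

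\textbf{Main obstacle.} The delicate point is that composition $(g,h)\mapsto g\circ h$ is only continuous, not Lipschitz, in $C^1$. It is continuous here because all the maps involved are uniformly continuous together with their first derivatives on the compact sphere, so continuity survives the composition.

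Hence $t\mapsto\widetilde\rho_t$ is continuous in $C^1(\Sph)$. Passing to the Bergman graph function via
\[
\widetilde u_t(\theta)=\frac{2\operatorname{arctanh}\widetilde\rho_t(\theta)}{r(t)}-1,
\qquad r(t)=2\operatorname{arctanh}R_t>0,
\]
is continuous in $t$, because both numerator and denominator are. Finally, $C^1(\Sph)$ embeds continuously into $W^{1,2}(\Sph)$, so $t\mapsto\widetilde u_t$ is continuous into $W^{1,2}(\Sph)$. A continuous map into a separable Banach space is strongly measurable, and $t\mapsto\|\widetilde u_t\|_{W^{1,2}}^2$ is continuous, hence measurable.
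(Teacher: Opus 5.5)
Your proposal is correct and follows essentially the paper's route: a parameterized implicit function theorem for $\rho_t$, then continuity of the barycenter $a(t)$, then the explicit canonical-automorphism graph construction of Lemma~\ref{lem:recenter_preserves_graph}. The only minor variation is that for $a(t)$ you lead with the uniform Hessian bound \eqref{eq:local_hessian_bound} plus the implicit function theorem, whereas the paper uses the coercivity and unique-minimizer compactness argument that you list as your alternative. You also spell out steps the paper leaves implicit: continuity of $R_t$, $C^1$-continuity of $\Theta_t^{-1}$ and of the compositions, and the explicit formula for $\widetilde u_t$ in terms of $\widetilde\rho_t$ and $r(t)$.
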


\begin{proof}
The equation $F(\rho_t(\omega),\omega,t)=0$ in the proof of
Lemma~\ref{lem:IFT_radial_graph} has a radial derivative bounded away from
zero uniformly in $(t,\omega)$. The parameterized implicit-function theorem,
with the estimates already proved there, therefore gives continuity of
$t\mapsto\rho_t$ in $C^1(\Sn)$. It follows that
$t\mapsto\mathbf1_{A_t}$ is continuous in $L^1(d\mu)$.

The functionals $L_{A_t}$ are uniformly coercive and depend continuously on
$t$, locally uniformly together with their first derivatives. Their minimizer
is unique by \cite[Theorem~5.1]{JacimovicKalajAnnFennMath}. The usual compactness
argument for unique minimizers consequently gives continuity of $a(t)$.

For clarity, we use the canonical automorphism
\begin{equation}\label{eq:canonical_automorphism}
 \psi_a(z)=\frac{s_a z-a+(1-s_a)P_a z}{1-\langle z,a\rangle},
 \qquad s_a=\sqrt{1-|a|^2},
\end{equation}
where $P_a z=\langle z,a\rangle a/|a|^2$ for $a\ne0$ and the last term is
defined by continuity at $a=0$. Indeed,
$(1-s_a)/|a|^2=1/(1+s_a)$, so the apparently singular term equals
\[
 \frac{\langle z,a\rangle a}{1+s_a}
\]
and is smooth in $a$. Thus $\psi_a(a)=0$, $\psi_0=\operatorname{Id}$, and
$a\mapsto\psi_a$ is smooth in $C^2$ on every compact subball. For sufficiently
small perturbations, $A_t$ contains a fixed ball while $|a(t)|$ is small;
hence $a(t)\in A_t$ and $0\in\psi_{a(t)}(A_t)$. Applying the explicit graph
construction in Lemma~\ref{lem:recenter_preserves_graph} with the continuous
parameters $\rho_t$ and $a(t)$ proves continuity of $\widetilde u_t$ in
$C^1$, and therefore in $W^{1,2}$.
\end{proof}

\begin{proposition}[Verification of the local centered interval hypothesis]\label{prop:verify_setup}
Fix \(n\ge 1\), \(p>0\), \(\alpha>n\), a Euclidean radius
$0<\rho<1$, and numbers $0<\rho_-<\rho_+<\rho$. There exists
\[
\delta_0=\delta_0(n,p,\alpha,\rho,\rho_-,\rho_+)>0
\]
such that the following holds.

Let \(f=1+\phi\) be holomorphic in \(\B_n\). Assume that there are numbers
$0<t_-<t_+<1$ such that
\[
\rho_-\le \rho_0(t):=\sqrt{1-t^{1/\alpha}}\le\rho_+
\qquad(t\in[t_-,t_+]),
\]
and suppose that
\[
\|\phi\|_{C^1(\overline{\B_{\rho}})}\le \delta_0.
\]
Assume in addition that this interval satisfies
\[
\sup_{|z|\ge\rho}|f(z)|^p(1-|z|^2)^\alpha<t_-.
\]
Then, after decreasing $\delta_0$ if necessary, the geometric and measurability
parts of Setup~\ref{Setup} hold on
\((t_-,t_+)\) for a suitable radial-parameter cap
$R_0>0$.
\end{proposition}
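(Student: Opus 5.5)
The proof assembles the appendix lemmas in the order (a)--(c) and checks items (i)--(iv) of Setup~\ref{Setup} one at a time, shrinking $\delta_0$ finitely many times.

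\emph{Step 1: radial graphs, matched radius and the cap.} Let $\delta_0\le\delta_{\mathrm{rg}}$. Since the tail condition holds, Lemma~\ref{lem:IFT_radial_graph} applies for every $t\in[t_-,t_+]$. Each $\{u=t\}$ is the Euclidean radial graph $\rho_t$, with $\rho_t(\omega)\in[\rho_-/2,(\rho_++\rho)/2]$. Lemma~\ref{lem:volume_matched_radius} then gives the Euclidean volume-matched radius $R_t$ with $|R_t-\rho_0(t)|\le C\delta_0$. Hence $R_t\le(\rho_++\rho)/2<1$, and $r(t)=2\operatorname{arctanh}R_t$ is bounded by
\[
R_0:=2\operatorname{arctanh}\bigl((\rho_++\rho)/2\bigr),
\]
which proves (i). For later use, $r(t)\in[r_-,r_+]$ with $r_-:=2\operatorname{arctanh}(\rho_-/2)$ and $r_+:=R_0$. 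By Remark~\ref{rem:euclidean_to_bergman_graph}, $\partial A_t$ is a Bergman radial graph relative to $r(t)$, with graph function $u_t$ satisfying $\|u_t\|_{W^{1,\infty}}\le C\delta_0$.

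\emph{Step 2: barycenter and recentering.} Lemma~\ref{lem:barycenter_bound_appendix} gives $|a(t)|=|\mathrm{Bar}(A_t)|\le C\delta_0$. Set $\psi_t:=\psi_{a(t)}$, the canonical automorphism of \eqref{eq:canonical_automorphism}, and $\widetilde A_t:=\psi_t(A_t)$. Now apply Lemma~\ref{lem:recenter_preserves_graph} with $r_0=R_0$ and the $r_\pm$ from Step~1. Once $C\delta_0\le\eta_0$, it shows that $\partial\widetilde A_t$ is a Bergman radial graph with $\|\widetilde u_t\|_{W^{1,\infty}}\le C(\|u_t\|_{W^{1,\infty}}+|a(t)|)\le C'\delta_0$. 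Shrinking $\delta_0$ so that $C'\delta_0\le\varepsilon_0(R_0)$ gives (iii). Invariance of $\mu$ gives $\mu(\widetilde A_t)=\mu(A_t)$, so the comparison radius is still $r(t)$.

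\emph{Main obstacle: item (ii).} We need $\mathrm{Bar}(\widetilde A_t)=0$, and the paper never states this. I would get it from the equivariance of the Jaćimović--Kalaj barycenter. The kernel
\[
\log\frac{(1-|a|^2)(1-|z|^2)}{|1-\langle a,z\rangle|^2}=\log\bigl(1-|\varphi_a(z)|^2\bigr)
\]
is automorphism-invariant, and $\mu$ is invariant. Hence $L_{\psi(E)}(\psi(a))=L_E(a)$ for $\psi\in\Aut(\B_n)$. By uniqueness of the minimizer \cite[Theorem~5.1]{JacimovicKalajAnnFennMath}, $\mathrm{Bar}(\psi(E))=\psi(\mathrm{Bar}(E))$. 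With $\psi=\psi_{a(t)}$ this gives $\mathrm{Bar}(\widetilde A_t)=\psi_{a(t)}(a(t))=0$. If the invariance of the kernel turns out to need more care, the fallback is to check directly that $\nabla L_{\widetilde A_t}(0)=0$ by the change of variables and then use uniqueness.

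\emph{Step 3: regularity and measurability.} Item (iv) follows from Lemma~\ref{lem:measurable_recentering}: $t\mapsto\widetilde u_t$ is continuous into $W^{1,2}(\Sph)$, hence strongly measurable. For the regularity of levels, on $\{u=t\}$ the bound \eqref{eq:drF_neg_appendix} gives $\partial_\rho\log u\le-c_0<0$, so $\nabla u\neq0$ there. Every $t\in(t_-,t_+)$ is therefore a regular value, and the level sets lie compactly in $\B_\rho$. This completes the verification of all four items on $(t_-,t_+)$.
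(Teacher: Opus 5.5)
Your proposal is correct and follows the paper's proof essentially step for step. Both go through the radial-graph lemma, the volume-matched radius and cap, the barycenter bound, the recentering lemma, and the continuous selection for measurability. Your equivariance argument $\mathrm{Bar}(\widetilde A_t)=\psi_{a(t)}(a(t))=0$, from invariance of the kernel $\log(1-|\varphi_a(z)|^2)$ and uniqueness of the minimizer, justifies a step the paper only asserts ``by construction''; your regular-value check is likewise a useful addition.
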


\begin{proof}
By hypothesis, the compact interval of levels $[t_-,t_+]$ has model radii
$\rho_0(t)$ in the fixed interval $[\rho_-,\rho_+]\subset(0,\rho)$.
By Lemma~\ref{lem:IFT_radial_graph}, after taking \(\delta_0\) sufficiently small, each level set \(\{u=t\}\),
\(t\in[t_-,t_+]\), is a Euclidean radial graph with
\[
\|\rho_t-\rho_0(t)\|_{W^{1,\infty}(\Sn)}
\le
C\|\phi\|_{C^1(\overline{\B_{\rho}})}.
\]
Lemma~\ref{lem:volume_matched_radius} and
Remark~\ref{rem:euclidean_to_bergman_graph} yield the Bergman radial-graph
description of \(\partial A_t\) relative to its volume-matched radius, with
small \(W^{1,\infty}\)-norm. Since the level interval is compact away from \(0\) and \(\sup u\), the associated
radial parameters \(r(t)\) remain bounded by some $R_0>0$.

Next, Lemma~\ref{lem:barycenter_bound_appendix} gives
\[
|a(t)|:=|\mathrm{Bar}(A_t)|
\le
C\|\phi\|_{C^1(\overline{\B_{\rho}})}
\qquad\text{for a.e. }t\in(t_-,t_+).
\]
Applying Lemma~\ref{lem:recenter_preserves_graph} with \(E=A_t\) and \(a=a(t)\), we conclude that the re-centered sets
\[
\widetilde A_t:=\psi_t(A_t),
\qquad
\psi_t(a(t))=0,
\]
still have boundary given by a Bergman radial graph, with graph function \(\widetilde u_t\) satisfying
\[
\|\widetilde u_t\|_{W^{1,\infty}(\Sph)}
\le
\varepsilon_0(R_0)
\]
provided \(\delta_0\) is chosen sufficiently small. By construction,
\[
\mathrm{Bar}(\widetilde A_t)=0.
\]
Lemma~\ref{lem:measurable_recentering} shows that the recenterings and their
graph functions may be chosen continuously in $t$. Thus all geometric and
measurability parts of Setup~\ref{Setup} are verified. As emphasized in
Remark~\ref{rem:anchoring_status}, the separate anchoring inequality required
by Theorem~\ref{thm:local_dominance} is not asserted here.
\end{proof}
\section*{Acknowledgements}
The first author gratefully acknowledges financial support from the Ministry of Education, Science and Innovation of Montenegro through the grants \emph{``Mathematical Analysis, Optimisation and Machine Learning''} and \emph{``Complex-analytic and geometric techniques for non-Euclidean machine learning: theory and applications.''} Jian-Feng Zhu was supported by the National Natural Science Foundation of China (Grant Nos. 12271189 and 12671096), the Natural Science Foundation of Guangdong Province (Grant Nos. 2024A1515010467 and 2026A1515012333), the STU Scientific Research Initiation Grant (No. NTF25017T), the HQU Teaching Reform Project (No. HQJGKT2411), and the Fujian Alliance of Mathematics (Grant No. 2023SXLMMS07).

\end{document}